\documentclass[10pt,psamsfonts]{amsart}
\usepackage[utf8]{inputenc}

\usepackage[makeroom]{cancel}
\usepackage{float}
\usepackage{mathabx}
\usepackage{amsmath}
\usepackage{amsthm}
\usepackage{amssymb}
\usepackage{amscd}
\usepackage{amsfonts}
\usepackage{amsbsy}
\usepackage{graphicx}
\usepackage[dvips]{psfrag}
\usepackage{array}
\usepackage{xcolor}
\usepackage{epsfig}
\usepackage{url}
\usepackage{hyperref}
\usepackage{enumitem}
\usepackage{overpic}
\usepackage{epstopdf}
\usepackage{apptools}

\newcommand{\Int}{\mathrm{Int}}

\newcommand{\R}{\ensuremath{\mathbb{R}}}

\newcommand{\ov}{\overline}

\newcommand{\x}{\mathbf{x}}

\newcommand{\sgn}{\mathrm{sign}}

\newcommand{\bb}{\mathbf{b}}

\def\p{\partial}
\def\e{\varepsilon}

\newtheorem {theorem} {Theorem}

\newtheorem {proposition} {Proposition}
\newtheorem {corollary} {Corollary}
\newtheorem {lemma}[theorem] {Lemma}

\newtheorem {remark} {Remark}

\usepackage{mathpazo}

\textwidth=14truecm

\begin{document}
\renewcommand{\arraystretch}{1.5}

\title[Bounding the number of limit cycles in piecewise linear differential systems]
{Bounding the number of limit cycles\\ in piecewise linear differential systems: \\ methodology and worked examples}

\author[V. Carmona, F. Fernandez-S\'{a}nchez, and D. D. Novaes]
{Victoriano Carmona$^1$, Fernando Fern\'{a}ndez-S\'{a}nchez$^2$,\\ and Douglas D. Novaes$^3$}

\address{$^1$ Dpto. Matem\'{a}tica Aplicada II \& IMUS, Universidad de Sevilla, Escuela Polit\'ecnica Superior.
Calle Virgen de \'Africa 7, 41011 Sevilla, Spain.} 
 \email{vcarmona@us.es} 

\address{$^2$ Dpto. Matem\'{a}tica Aplicada II \& IMUS, Universidad de Sevilla, Escuela T\'{e}cnica Superior de Ingenier\'{i}a.
Camino de los Descubrimientos s/n, 41092 Sevilla, Spain.} \email{fefesan@us.es}

\address{$^3$ Departamento de Matem\'{a}tica, Instituto de Matem\'{a}tica, Estatística e Computa\c{c}\~{a}o Cient\'{i}fica (IMECC), Universidade
Estadual de Campinas (UNICAMP), Rua S\'{e}rgio Buarque de Holanda, 651, Cidade Universit\'{a}ria Zeferino Vaz, 13083--859, Campinas, SP,
Brazil.} \email{ddnovaes@unicamp.br} 

\subjclass[2010]{34A26, 34A36, 34C25}

\keywords{planar piecewise differential linear systems, limit cycles, upper bounds, Poincaré half-maps, Khovanski\u{\i}'s  theory}

\maketitle

\begin{abstract}
In 2012, Huan and Yang introduced the first piecewise linear differential system with two zones separated by a straight line having at least three limit cycles, serving as a counterexample to the Han-Zhang conjecture that said that such systems have no more than two limit cycles. Over the past decade, extensive research has been conducted to explore periodic solutions in piecewise linear differential systems. However, the question of whether the Huan-Yang example indeed has exactly three limit cycles has remained unresolved, primarily due to the lack of techniques for bounding the number of limit cycles in these systems. Based on the authors' recent results, this paper presents a methodology for bounding the number of limit cycles in piecewise linear systems. This methodology conclusively establishes that the Huan-Yang example has exactly three limit cycles. Our methodology has a broader applicability and constitutes a powerful tool for analyzing and bounding the number of limit cycles in any explicit example of piecewise linear differential systems. We extend our analysis to several recent examples of significance in the literature and show that they also exhibit exactly three limit cycles. Finally, we present an algebraic criterion for bounding the number of crossing limit cycles in the focus-focus case.

\end{abstract}

\section{Introduction}

In the last decades, many studies investigated the number of limit cycles in piecewise linear differential systems of kind:
\begin{equation}\label{s1}
\dot \x =
\left\{\begin{array}{l}
A_L\x+\bb_L, \quad\textrm{if}\quad x_1\leq 0,\\
A_R\x+\bb_R, \quad\textrm{if}\quad x_1\geq 0,
\end{array}\right.
\end{equation}
being $\x=(x_1,x_2)\in\R^2$, $A_{L,R}=(a_{ij}^{L,R})_{2\times 2}$, and $\bb_{L,R}=(b_1^{L,R},b_2^{L,R})\in\R^2$, which has their solutions established by Filippov's convention \cite{Filippov88}.

Starting with Lum and Chua \cite{LumChua91} in 1991, they established the conjecture that, under the continuity conditions $a_{12}^L = a_{12}^R$, $a_{22}^L = a_{22}^R$, and $\bb_L = \bb_R$, system \eqref{s1} would possess at most a unique limit cycle. A first proof for this conjecture was provided seven years later, in 1998, by Freire et al. \cite{FreireEtAl98}. Subsequent research aimed to relax these continuity conditions to explore more complex dynamics. Han and Zhang \cite{HanZhang10} provided in 2010  the first examples of differential systems of type \eqref{s1} presenting two limit cycles. Their investigation led them to conjecture that these systems could not have more than two limit cycles. Nevertheless, two years later, Huan and Yang \cite{HuanYang12} provided a counterexample for this conjecture, presenting the first example of a system of type \eqref{s1} with at least three limit cycles. Their example assumed:
\begin{equation}\label{HY-example}
\begin{aligned}
&A_R=\left (\begin{array}{cc} \dfrac{19}{500}&-\dfrac{1}{10}\vspace{0.3cm}\\ \dfrac{1}{10}& \dfrac{19}{500}\end{array}\right),\quad A_L=\left (\begin{array}{cc} 1&-5\vspace{0.3cm}\\ \dfrac{377}{1000}& -\dfrac{13}{10}\end{array}\right),\\ 
&\bb_R=\left (\begin{array}{c} \dfrac{19}{500}\vspace{0.3cm} \\ \dfrac{1}{10}\end{array}\right),\quad \text{and}\quad \bb_L=\left (\begin{array}{cc}1\vspace{0.3cm}\\ \dfrac{377}{1000}\end{array}\right).
\end{aligned}
\end{equation}
Huan and Yang employed a numerical approach to observe the three limit cycles. However, Llibre and Ponce \cite{LlibrePonce12} conclusively demonstrated, in the same year, the presence of these numerically observed limit cycles by applying the Newton-Kantorovich theorem. Ever since, several studies obtained systems of type \eqref{s1} for which the presence of three limit cycles is ensured, we may refer to \cite{BuzziEtAl13,cardoso20,FreireEtAl14, FreireEtAl14b, LlibreEtAl15b,NovaesTorregrosa17}. In what follows we emphasize two other  explicit examples.

In \cite{FPTT21}, Freire et al. considered differential systems of type \eqref{s1} with a periodic orbit at infinity. By means of Hopf bifurcation at infinity they derived an example with three limit cycles of high amplitude. Their example assumed:
\begin{equation}\label{FPTT-example}
\begin{aligned}
&A_L=\left (\begin{array}{cc} -\dfrac{1}{4}&-1\vspace{0.3cm}\\ \dfrac{65}{64}& 0\end{array}\right),\quad A_R=\left (\begin{array}{cc} \dfrac{3276710}{13106841}&-1\vspace{0.3cm}\\ \dfrac{174473488105306}{171789280999281}& 0\end{array}\right),\\ 
&\bb_L=\left (\begin{array}{c} \dfrac{260534}{1045519}\vspace{0.3cm} \\ -\dfrac{65}{64}\end{array}\right),\quad \text{and}\quad \bb_R=\left (\begin{array}{cc}-\dfrac{260534}{1045519}\vspace{0.3cm}\\ -\dfrac{96440395023695996806}{95571015330487000887}\end{array}\right).
\end{aligned}
\end{equation}

The most recent example of system of type \eqref{s1} with three limit cycles has been provided by Gasull et al. \cite{rondon} in the context of piecewise holomorphic system. Their example assumed:
\begin{equation}\label{GRS-example}
\begin{aligned}
&A_L=\left (\begin{array}{cc} -\dfrac{1}{5}&1\vspace{0.3cm}\\ -1& -\dfrac{1}{5}\end{array}\right),\quad A_R=\left (\begin{array}{cc} \dfrac{3}{8}&1\vspace{0.3cm}\\ -1& \dfrac{3}{8}\end{array}\right),\\ 
&\bb_L=\left (\begin{array}{c} \dfrac{1}{250}\vspace{0.3cm} \\ -\dfrac{51}{50}\end{array}\right),\quad \text{and}\quad \bb_R=\left (\begin{array}{cc}\dfrac{1159}{1000}\vspace{0.3cm}\\ -\dfrac{14333}{2000}\end{array}\right).
\end{aligned}
\end{equation}

Due to the lack of techniques for bounding the number of limit cycles in differential systems of type \eqref{s1}, it remained an open question for over a decade whether the Huan-Yang example, i.e., system \eqref{s1} with parameter values specified in \eqref{HY-example}, could have more than three limit cycles. The same situation holds for the other two mentioned examples, when the parameter values are given either by  \eqref{FPTT-example} or \eqref{GRS-example}.

Recently in \cite{CarmonaEtAl19}, Carmona and Fern\'{a}ndez-S\'{a}nchez delved into the nature of Poincaré half-maps for planar linear differential systems and developed an integral characterization for these maps. The use of this new characterization in \cite{CarmonaEtAl19b} allowed a more straightforward and concise proof for the Lum-Chua conjecture, bypassing the usual extensive case-by-case approach. Applying the same idea,  \cite{Carmona2022-mc} revealed that the differential system \eqref{s1} is constrained to a maximum of one limit cycle, provided there is no sliding set. More recently, by combining this approach with an extended version of Khovanski\u{\i}’s theory, \cite{Carmona2023} established a finite upper bound for the number of limit cycles that can occur in systems of type~\eqref{s1}. This technique was also employed in \cite{CFSN-center} to solve the center problem for system~\eqref{s1}.

This paper aims to systematize the analysis performed in \cite{Carmona2023} by outlining a methodology for obtaining upper bounds on the number of limit cycles in any explicit example of a differential system of type \eqref{s1}. This methodology will be detailed in Section \ref{sec:metho}. Subsequently, in Section \ref{sec:example}, we will apply this methodology to demonstrate the following result:

\begin{theorem}\label{main}
The piecewise linear differential system \eqref{s1}, with the parameter values given either by \eqref{HY-example}, \eqref{FPTT-example}, or \eqref{GRS-example}, has exactly three limit cycles.
\end{theorem}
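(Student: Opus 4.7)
The lower bound of three limit cycles for each of the three systems is already established in the cited literature (Llibre--Ponce via the Newton--Kantorovich theorem for the Huan--Yang example, and the respective original papers for the other two), so the work lies entirely in proving the upper bound of three. My plan is to apply, for each parameter choice \eqref{HY-example}, \eqref{FPTT-example}, \eqref{GRS-example}, the methodology of Section \ref{sec:metho}, which rests on two pillars: the integral characterization of Poincaré half-maps from \cite{CarmonaEtAl19} and the extension of Khovanski\u{\i}'s theory developed in \cite{Carmona2023}.

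The first step, for each example, is to identify the dynamical type of the linear fields on each half-plane (focus, node, saddle, or improper node) from the traces and determinants of $A_L$ and $A_R$, and then to write down the left and right Poincaré half-maps $y \mapsto P_L(y)$ and $y \mapsto P_R(y)$ on the switching line $\{x_1 = 0\}$ via the integral formulas of \cite{CarmonaEtAl19}. A crossing limit cycle corresponds to a zero of the displacement function $\Delta(y) := P_R(y) - P_L^{-1}(y)$ on the common interval of ordinates where both half-maps are defined, so the goal reduces to bounding the number of zeros of $\Delta$.

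The second step is to count these zeros using the Khovanski\u{\i}-type framework from \cite{Carmona2023}. Rather than computing $\Delta$ explicitly---which is hopeless because of the transcendental nature of the half-maps when the linear part has complex eigenvalues---the plan is to embed $\Delta$ into a carefully chosen Pfaffian chain, and then iterate Rolle's theorem: each differentiation reduces the transcendental content by one level, replacing $\Delta$ eventually by an algebraic expression whose real zeros in the relevant interval can be counted via Descartes' rule, Sturm sequences, or resultants, all computable exactly from the rational data defining \eqref{HY-example}, \eqref{FPTT-example}, \eqref{GRS-example}. Adding the boundary contributions at the endpoints of the interval of definition should then yield the desired upper bound of three.

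The main obstacle will be \emph{tightness}: a naive Khovanski\u{\i} estimate normally gives a bound several times the true number of zeros, so one has to choose the Pfaffian chain parsimoniously and exploit sign information derived from the specific rational parameter values in order to pin the count down to exactly three. A secondary, more bookkeeping-type obstacle is the non-uniformity across the three examples: Huan--Yang is focus--focus, \eqref{FPTT-example} was built from a Hopf bifurcation at infinity and involves delicate control near the line at infinity, and \eqref{GRS-example} comes from a piecewise holomorphic setting, so although the overall methodology is uniform, the algebraic certificates (sign patterns, resultant vanishing loci, interval endpoints) have to be produced separately and verified by exact rational arithmetic for each case.
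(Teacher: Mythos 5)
You correctly identify the right toolbox (the integral characterization of the Poincar\'e half-maps and the Khovanski\u{\i}-type extension from \cite{Carmona2023}) and correctly reduce the problem to bounding zeros of a displacement function, but the mechanism you propose for the actual count --- embedding $\Delta$ in a Pfaffian chain and iterating Rolle's theorem until only an algebraic expression remains --- is not the argument that works, and you give no way to carry it out. The paper never differentiates the displacement function repeatedly. It works instead in the $(y_0,y_1)$-plane, where the graphs of the two half-maps are orbits of explicit \emph{cubic polynomial} vector fields $X_L$, $X_R$, and the decisive computation is that at any zero $y_0^*$ of $\delta_b$ the sign of $\delta_b'(y_0^*)$ equals the sign of an explicit polynomial $F_b(y_0^*,y_1^*)$, quadratic in each variable. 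Since consecutive nested hyperbolic limit cycles have opposite stability, the number of hyperbolic cycles is at most one more than the number of isolated intersections of the algebraic curve $F_b=0$ with the half-map orbit; Theorem \ref{thm:count} bounds those intersections by $N+k$, where $k$ counts contact points, i.e.\ solutions of the purely polynomial system $F_b=G_b=0$ (handled by resultants and B\'ezout, in exact rational arithmetic), and $N$ counts noncompact components of the curve. For all three examples the resultant has degree $6$ with exactly one root in the admissible half-plane (Descartes for \eqref{HY-example} and \eqref{GRS-example}, Sturm for \eqref{FPTT-example}), and the curve is a hyperbola with a single branch in that half-plane, giving $N=k=1$ and the bound $N+k+1=3$. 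Your ``tightness'' worry is exactly the crux, and your plan leaves it unresolved: a generic Pfaffian/Rolle estimate would not deliver $3$.

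There is a second, independent gap: even granting a bound on \emph{simple} zeros of the displacement function, you have only bounded the number of \emph{hyperbolic} limit cycles. The paper must embed each system in the one-parameter family \eqref{1-cf}, verify $\partial\delta_b/\partial b>0$, and invoke Lemma \ref{lem:semi} to convert a uniform bound on simple zeros for all $b$ near $b^*$ into a bound on all isolated zeros at $b=b^*$; this step is absent from your proposal. A minor factual correction: all three examples are of focus--focus type and are treated uniformly --- no analysis near the line at infinity is needed for \eqref{FPTT-example}; the only variation there is that Sturm's theorem replaces Descartes' rule because of the size of the coefficients.
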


\noindent Finally, in Section \ref{ff-case}, the methodology will be used to derive an algebraic criterion for bounding crossing limit cycles in the focus-focus case. 

The reader may notice, as pointed out in Remark~\ref{anotherproof}, that the algebraic criterion which will be provided in Section \ref{ff-case} could also be used to prove Theorem~\ref{main}, and may reasonably ask why we did not present the criterion first, instead of analyzing the examples separately. Our choice to treat the examples individually is motivated by several reasons. First, the methodology we develop is not restricted to the focus-focus case, and the reasoning employed in Section~\ref{sec:example}, besides clearly illustrating its application, is also valuable for studying other configurations. Second, the analysis of specific examples offers a more didactic and intuitive grasp of the proof of the algebraic criterion, which might otherwise seem somewhat abstract or opaque. Finally, these explicit examples, especially the one introduced by Huan and Yang in \eqref{HY-example}, are of importance in the literature and merit detailed examination.

\section{Methodology}\label{sec:metho}

In this section, we establish a methodology for constraining the number of limit cycles of each differential system of type \eqref{s1}.  From now on, we consider system \eqref{s1} with all the parameters fixed. 

We recall that $a_{12}^{L}a_{12}^{R}\leq0$ avoids the existence of periodic solutions and, therefore, there are no limit cycles. Thus, in what follows we will deal with the nontrivial case $a_{12}^{L}a_{12}^{R}>0$.

Subsections \ref{sec:cf}-\ref{sec:clc} detail the step-by-step procedure of this approach, while Subsection \ref{outline} presents an overview of the methodology.

\subsection{Canonical Form}\label{sec:cf}
The procedure starts by transforming the given system \eqref{s1} into the Li\'{e}nard canonical form given in \cite[Proposition 3.1]{FreireEtAl12}. Since $a_{12}^{L}a_{12}^{R}>0$,  there exists  a homeomorphism  that preserves the separation line $\Sigma=\{(x,y)\in\R^2:\,x=0\}$  and writes the differential system \eqref{s1} as:
\begin{equation}\label{cf}
\left\{\begin{array}{l}
\dot x= T_L x-y\\
\dot y= D_L x-a_L
\end{array}\right.\quad \text{for}\quad x< 0,
\quad 
\left\{\begin{array}{l}
\dot x= T_R x-y+b^*\\
\dot y= D_R x-a_R
\end{array}\right.\quad \text{for}\quad x> 0,
\end{equation}
where $T_L,$ $T_R$ and $D_L,$ $D_R$ are, respectively, the traces and the determinants of the matrices $A_L$ and $A_R$,
\begin{equation*}\label{coef}
a_{L}=a_{12}^{L}b_2^{L}-a_{22}^{L}b_1^{L},\quad a_{R}=\dfrac{a_{12}^L}{a_{12}^R}(a_{12}^{R}b_2^{R}-a_{22}^{R}b_1^{R}), \quad\text{and}\quad b^*=a_{12}^Lb_1^R/a_{12}^R-b_1^L.
\end{equation*}
 
\begin{remark}\label{rekb}
Notice that, since system \eqref{cf} comes from the given system \eqref{s1}, all the parameters in \eqref{cf} are fixed. In the following sections, we will perturbe the parameter $b^*$, that is why we only added the star symbol $^*$ in this parameter.
\end{remark}

\subsection{Half-Poincar\'{e} maps}

In what follows, we will analyze the following 1-parameter family of piecewise linear systems:
\begin{equation}\label{1-cf}
\left\{\begin{array}{l}
\dot x= T_L x-y\\
\dot y= D_L x-a_L
\end{array}\right.\quad \text{for}\quad x< 0,
\quad 
\left\{\begin{array}{l}
\dot x= T_R x-y+b\\
\dot y= D_R x-a_R
\end{array}\right.\quad \text{for}\quad x> 0,
\end{equation}
for $b$ near $b^*$. Our approach consists in limiting the amount of limit cycles present in system \eqref{1-cf} for any $b$ sufficiently close to $b^*$, including the case when $b = b^*$.

In order to apply the ideas developed in \cite{Carmona2023}, we introduce two Poincaré Half-Maps defined on $\Sigma$. These are the \textit{Forward Poincaré Half-Map} $y_L: I_L \subset [0, +\infty) \rightarrow (-\infty, 0]$ and the \textit{Backward Poincaré Half-Map} $y_R^b: I_R^b \subset [b, +\infty) \rightarrow (-\infty, b]$, which, from \cite[Theorems 1 and 2]{Carmona2022-mc},  are well-defined if, and only if, the following set of conditions hold:
\begin{equation*}\label{def.cond}
\begin{cases}
a_L\leq 0\,\text{ and }\, 4D_L-T_L^2>0,\, \text{ or }\, a_L>0;\\
a_R\geq 0\,\text{ and }\, 4D_R-T_R^2>0,\, \text{ or }\, a_R<0.
\end{cases}
 \end{equation*}
 We stress that if the above conditions do not hold, then system \eqref{1-cf} does not have limit cycles.
In short, the forward flow of \eqref{1-cf} sends a point $(0, y_0)$ to $(0, y_L(y_0))$, while the backward flow of \eqref{1-cf} sends a point $(0, y_0)$ to $(0, y_R^b(y_0))$.
Clearly, the differential system \eqref{1-cf} restricted to $x\leq 0$ defines $y_L$, while the differential system of \eqref{1-cf} restricted to $x\geq 0$ defines $y_R^b$. It is important to note that if $y_R: I_R \subset [0, +\infty) \rightarrow (-\infty, 0]$ denotes the Backward Poincaré Half-Map of \eqref{1-cf} for $b = 0$ (that is,  $y_R=y_R^0$ and $I_R=I_R^0$),  then $y_R^b(y_0) = y_R(y_0 - b) + b$ and $I_R^b = I_R + b$.  The integral characterization for the maps $y_L$ and $y_R$ has been developed in \cite{CarmonaEtAl19}. We refer to \cite[Theorems 1 and 2]{Carmona2022-mc} for a concise description of this characterization. We will discuss some consequences of this characterization in the sequel.

Consider the polynomials
\begin{equation}\label{eq:poly}
W_L(y)=D_Ly^2-a_LT_Ly+a_L^2\quad\text{and}\quad W_R(y)=D_Ry^2-a_RT_Ry+a_R^2.
\end{equation}
As can be seen in \cite[Theorems 1 and 2]{Carmona2022-mc}, by denoting $\operatorname{ch}(\cdot)$ stands for the convex hull of a set, these polynomials satisfy $W_L(y)>0$ for $y \in \operatorname{ch}(I_L\cup y_L(I_L))\setminus\{0\}$ and  $W_R(y)>0$ for $y \in \operatorname{ch}(I_R\cup y_R(I_R))\setminus\{0\}$, and encode essential aspects of the maps $y_L$ and $y_R$. In fact, they provide the  following cubic vector fields
\begin{equation*}\label{dy_1L}
X_L(y_0,y_1)=-\big(y_1W_L(y_0) ,y_0 W_L(y_1)\big) \quad\text{and}\quad X_R(y_0,y_1)=-\big(y_1W_R(y_0) ,y_0 W_R(y_1)\big),
\end{equation*}
for which the graphs of $y_L$ and $y_R$, respectively, correspond to specific segments of orbits located in the closure $\ov{Q}$ of the open fourth quadrant 
\[Q:=\{(y_0,y_1):\,y_0>0\,\text{ and }\, y_1<0\}.\] Additionally, the maps $y_L(y_0)$ and $y_R(y_0)$ satisfy, respectively, the following differential equations:
 \begin{equation}
\label{eq:odeL}
\dfrac{d y_1}{d y_0}= \dfrac{y_0W_L(y_1)}{y_1W_L(y_0)} \quad \text{and}\quad  \dfrac{d y_1}{d y_0}= \dfrac{y_0W_R(y_1)}{y_1W_R(y_0)}.
\end{equation}

\begin{remark}\label{ILLR}
The polynomials $W_L$ and $W_R$ also give crucial properties of $I_L$ and $I_R$, intervals of definition of $y_L$ and $y_R$, respectively, and their images. Indeed, from \cite[Theorem 1]{Carmona2022-mc}) we have $I_L=[\lambda_L,\mu_L)$ and $y_L(I_L)=(\mu_L^1,y_L(\lambda_L)]$ satisfying:
\begin{itemize}
\item  $\mu_L$ corresponds to the smallest strictly positive root of $W_L$,  if it exists; if not $\mu_L=+\infty$.
\item $\lambda_L>0$ if, and only if, $a_L<0$, $4D_L-T_L^2>0$, and $T_L<0$.
\item  $\mu_L^1$ correspond to the largest strictly negative root of $W_L$, if it exists; if not $\mu_L^1=-\infty$.
\item $y_L(\lambda_L)<0$ if, and only if, $a_L<0$, $4D_L-T_L^2>0$, and $T_L>0$.
\item $\lambda_L>0$ implies $y_L(\lambda_L)=0$, while $y_L(\lambda_L)<0$ implies  $\lambda_L=0$.
\end{itemize}
Analogously, from \cite[Theorems 2]{Carmona2022-mc}, we have $I_R=[\lambda_R,\mu_R)$ and $y_R(I_R)=(\mu_R^1,y_R(\lambda_R)]$ satisfying:
\begin{itemize}
\item $\mu_R$ corresponds to the smallest strictly positive root of $W_R$, 
if it exists; if not $\mu_R=+\infty$.

\item $\lambda_R>0$  if, and only if, $a_R>0$, $4D_R-T_R^2>0$, and $T_R>0$.

\item $\mu_R^1$ correspond to the largest strictly negative root of $W_R$, if it exists; if not  $\mu_R^1=-\infty$.

\item $y_R(\lambda_R)<0$  if, and only if,  $a_R>0$, $4D_R-T_R^2>0$, and $T_R<0$.
\item$\lambda_R>0$ implies $y_R(\lambda_R)=0$, while $y_R(\lambda_R)<0$ implies $\lambda_R=0$.
\end{itemize}
\end{remark}

\subsection{Displacement function}

The importance of introducing the Poincaré half-maps above lies in their role in defining the following displacement function 
\[
\delta_b(y_0)=y_R(y_0-b)+b-y_L(y_0),\quad y_0\in I_b:=I_L\cap(I_R+b),
\]
which is crucial for analyzing the crossing periodic solutions of  \eqref{1-cf}. In fact, crossing periodic solutions, limit cycles, and hyperbolic limit cycles of  \eqref{1-cf} correspond bijectively to zeros, isolated zeros, and simple zeros, respectively, of $\delta_b$ in $\operatorname{int}(I_b)$. Obviously, if $\operatorname{int}(I_b)=\emptyset$, system \eqref{1-cf} has no crossing periodic solutions. Thus, in the search of limit cycles, we must assume that $\operatorname{int}(I_b)\neq \emptyset$.

\subsection{Bounding the number of hyperbolic crossing limit cycles} 
The next reasoning  will allow us to bound the number of hyperbolic crossing limit cycles of \eqref{1-cf}, equivalently, the number of simple zeros of $\delta_b$, for $b$ near $b^*$.

Let $y_0^*\in \operatorname{int}(I_b)$ satisfy $\delta_b(y_0^*)=0$. Then, from \eqref{eq:odeL}, we have 
\[
\delta_b'(y_0^*)=\dfrac{(y_0^*-y_1^*)}{y_1^*(y_1^*-b)W_L(y_0^*)W_R(y_0^*-b)}F_b(y_0^*,y_1^*),
\]
where $y_1^*=y_R(y_0^*-b)+b=y_L(y_0^*)<\min(0,b),$
\[
F_b(y_0,y_1)=m_0 + m_1 (y_0 + y_1) + m_2 y_0 y_1 + m_3 (y_0^2 + y_1^2) + 
 m_4 (y_0 y_1^2 + y_0^2 y_1) + m_5 y_0^2 y_1^2,
\]
and $m_i,$ $i=1,\ldots,5,$ are given by:
\begin{equation}\label{conic-coef}
\begin{aligned}
m_0 =& a_L^2 (a_R^2 b + b^3 D_R + a_R b^2 T_R),\\
m_1 = &-a_L^2 b (2 b D_R + a_R T_R) ,\\
m_2 = &-a_R^2 (b D_L + a_L T_L) + b D_R (3 a_L^2 - b^2 D_L + a_L b T_L) + 
    a_R (a_L^2 - b^2 D_L) T_R,\\
m_3 =& a_L^2 b D_R ,\\
m_4 = &a_R^2 D_L - D_R (a_L^2 - b^2 D_L + a_L b T_L) + a_R b D_L T_R ,\\
m_5 = & a_L D_R T_L - a_R D_L T_R-b D_L D_R .
\end{aligned}
\end{equation}

The following remark is a key observation to limit the amount of hyperbolic crossing limit cycles of \eqref{1-cf}.

\begin{remark}\label{key}
The number of simple zeros of $\delta_b$ corresponds to the number of isolated intersection points between the graphs of the Poincar\'{e} half-maps $y_L(y_0)$ and $y_R^b(y_0)$ for $y_0\in I_b$. We denote such graphs by:
\[
\widehat{\mathcal{O}}_{L}^b:=\{(y_0,y_L(y_0)):\,y_0\in \Int (I_b)\}\quad\text{and}\quad \widehat{\mathcal{O}}_{R}^b:=\{(y_0,y_R^b(y_0)):\,y_0\in \Int (I_b)\}.
\]
Let $U_b\subset Q$ be a simply connected domain containing either $\widehat{\mathcal{O}}_{L}^b$ or $\widehat{\mathcal{O}}_{R}^b$. Since $W_L(y_0^*)W_R(y_0^*-b)>0$ and $(y_0^*-y_1^*)y_1^*(y_1^*-b)>0$, we conclude that
$$
\sgn(\delta'_b(y_0^*))=\sgn(F_b(y_0^*,y_1^*)).
$$
This indicates that the curve $\gamma_b = F_b^{-1}(\{0\})\cap U_b$ divides the set $U_b$  into two open regions: one where $F_b(y_0,y_1)>0$ and another where $F_b(y_0,y_1)<0$. Consequently, the curve $\gamma_b$ separates the points in $U_b$ corresponding to attracting hyperbolic crossing limit cycles from those corresponding to repelling ones. Moreover, considering that two consecutive nested hyperbolic limit cycles must have opposite stability and taking into account that $\gamma_b$ is an algebraic curve and that $\widehat{\mathcal{O}}_{L}^b$ and $\widehat{\mathcal{O}}_{R}^b$ are analytic curves (because they are orbits of the polynomial vector fields) the number of hyperbolic limit cycles, and hence the number of simple zeros of the displacement function $\delta_b$, is constrained by the number of isolated intersection points between $\gamma_b$ and either the curve $\widehat{\mathcal{O}}_{L}^b$ or $\widehat{\mathcal{O}}_{R}^b$, increased by one.
\end{remark}

A useful tool for bounding the number of isolated intersection points between a smooth curve and  orbits of a vector field is the Khovanski\u{\i}'s Theorem (see \cite[Chapter II]{kho}). Here, we will use an extension of Khovanski\u{\i}’s Theorem to control the number of intersections between $\gamma_b$ and either the curve $\widehat{\mathcal{O}}_{L}^b$ or $\widehat{\mathcal{O}}_{R}^b$. This result employs the concept of a separating solution of vector fields defined on open simply connected subsets of $\R^2$, as defined in \cite[Definition 3]{Carmona2023}: an orbit $\mathcal{O}$ of a vector field vector field $X:U\to\R^2$, where $U$ is a open simply connected subset of $\R^2$, is said to be a {\bf separating solution} if it is either a cycle or a noncompact trajectory satisfying $(\ov{\mathcal{O}}\setminus\mathcal{O})\subset\partial U$. We notice that $\ov{\mathcal{O}}$ and $\partial U$ correspond, respectively, to the closure of $\mathcal{O}$ and the boundary of $U$, both with respect to $\R^2$. The extension of Khovanski\u{\i}’s Theorem is given as follows:

\begin{theorem}[{\cite[Theorem 4]{Carmona2023}}]\label{thm:count}
Consider a vector field $X:U\to\R^2$ defined on an open simply connected subset $U\subset\R^2.$ Let $\gamma\subset U$ be a smooth curve with at most $N$ noncompact (and any number of compact) connected components and having at most $k$ contact points with $X$. Then, there are at most $N+k$ isolated intersection points between $\gamma$ and any orbit of $X$ that is a separating solution.
\end{theorem}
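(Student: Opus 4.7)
The plan is to prove this statement as an extension of the classical Khovanski\u{\i}--Rolle theorem to curves $\gamma$ with possibly multiple connected components, including noncompact ones, with a per-component count as the combinatorial core.

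First, I would record the structural consequence of the separating-solution hypothesis. Since $U$ is open and simply connected, and $\mathcal{O}$ is either a cycle in $U$ or a noncompact trajectory with $(\ov{\mathcal{O}}\setminus\mathcal{O})\subset\partial U$, the set $U\setminus\mathcal{O}$ splits into two open connected components $U^+$ and $U^-$ sharing $\mathcal{O}$ as common boundary inside $U$. In particular, any connected smooth arc in $U$ that transversally crosses $\mathcal{O}$ switches from $U^+$ to $U^-$ (or vice versa) at each crossing; this gives a consistent notion of "side" along $\gamma$.

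Second, I would establish the Rolle-type lemma for separating solutions: if $\alpha\subset U$ is a connected smooth arc and $p_1,p_2\in\alpha\cap\mathcal{O}$ are two consecutive isolated intersections along $\alpha$, then the sub-arc of $\alpha$ from $p_1$ to $p_2$ contains at least one contact point of $\alpha$ with $X$. The argument is by contradiction. Assume $X$ is transverse to $\alpha$ throughout the sub-arc $\alpha^*$. Then $\alpha^*$ together with the arc $\mathcal{O}^*$ of $\mathcal{O}$ between $p_1$ and $p_2$ forms a Jordan curve in $U$, which by simple connectivity of $U$ bounds a Jordan disk $D\subset U$. Along $\partial D$, the field $X$ is tangent to $\mathcal{O}^*$ and transverse to $\alpha^*$ with a consistent inward or outward orientation by connectedness. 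Flow invariance of $\mathcal{O}$ prevents orbits from crossing $\mathcal{O}^*$, while the consistent transversality on $\alpha^*$ makes $\ov{D}$ either positively or negatively invariant. A Poincaré--Bendixson analysis in $\ov{D}$ then produces an equilibrium, periodic orbit, or polycycle whose existence is incompatible with the prescribed behaviour of $X$ at the tangency endpoints $p_1,p_2$, where the separating orbit $\mathcal{O}$ both enters and exits $\ov{D}$.

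Third, I would carry out the combinatorial count. Decompose $\gamma=\gamma_1\sqcup\cdots\sqcup\gamma_s$ into its connected components, and for each $i$ let $m_i$ and $k_i$ denote, respectively, the number of isolated intersections of $\gamma_i$ with $\mathcal{O}$ and the number of contact points of $\gamma_i$ with $X$. If $\gamma_i$ is compact (a closed loop), traversing $\gamma_i$ once produces $m_i$ consecutive arcs between intersection points, each containing a contact by the Rolle-type lemma, so $m_i\leq k_i$. If $\gamma_i$ is noncompact, there are only $m_i-1$ arcs between consecutive intersections, whence $m_i\leq k_i+1$. Summing over $i$ and using $\sum_i k_i\leq k$ together with at most $N$ noncompact components gives $\sum_i m_i\leq k+N$, as required.

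The principal obstacle I expect is the rigorous justification of the Rolle-type lemma, particularly when $\mathcal{O}$ is noncompact. One must verify that the sub-arc of $\alpha$ and the arc of $\mathcal{O}$ between $p_1$ and $p_2$ genuinely bound a disk inside $U$ (rather than a region meeting $\partial U$), and the Poincaré--Bendixson step must be carried out carefully near the tangency endpoints $p_1,p_2$, since $X$ does not vanish there but becomes tangent to $\partial D$. A secondary subtlety is that the counting must use the "isolated" intersection hypothesis in the statement to preclude accumulation along each component of $\gamma$.
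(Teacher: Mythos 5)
This statement is not proved in the paper at all: it is imported verbatim as \cite[Theorem 4]{Carmona2023}, so there is no internal proof to compare against. Judged on its own terms, your overall architecture is the standard Khovanski\u{\i}--Rolle scheme and is the right one: (i) the separating solution $\mathcal{O}$ splits $U$ into two components $U^{+}$, $U^{-}$; (ii) a Rolle-type lemma placing a contact point on each sub-arc of $\gamma$ between consecutive isolated intersections with $\mathcal{O}$; (iii) the per-component count $m_i\leq k_i$ for compact components and $m_i\leq k_i+1$ for noncompact ones, summing to at most $k+N$.

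The genuine gap is in your proof of the Rolle-type lemma. Poincar\'e--Bendixson applied to the positively (or negatively) invariant disk $\ov{D}$ bounded by $\alpha^{*}\cup\mathcal{O}^{*}$ yields an equilibrium, a periodic orbit, or a polycycle of $X$ inside $D$ --- and that is not a contradiction. The theorem makes no hypothesis excluding equilibria or closed orbits of $X$ in $U$ (in this paper's application $X_L$, $X_R$ are cubic vector fields that do have singular points), and such an invariant set sitting in the interior of $D$ is perfectly compatible with $\mathcal{O}$ entering and exiting $\ov{D}$ at the corners $p_1,p_2$; the ``incompatibility'' you invoke is never actually produced. The correct (and much shorter) argument is one-dimensional: if $X$ were transverse to $\gamma$ along the whole closed sub-arc, then $t\mapsto\det\big(X(\gamma(t)),\gamma'(t)\big)$ would keep a constant sign there; on the other hand, letting $X^{\perp}$ be the $\pi/2$-rotation of $X$ along $\mathcal{O}$, which points consistently into one component, say $U^{+}$ (using that $\mathcal{O}$ is connected and $X$ does not vanish on it), the fact that the open sub-arc lies entirely in one component forces the $X^{\perp}$-component of $\gamma'$ to have one sign at $p_1$ and the opposite sign at $p_2$, so $\det(X,\gamma')$ changes sign between the endpoints --- a contradiction that produces the contact point with no recourse to Jordan domains or limit sets. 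A secondary point you should tighten is the bookkeeping when a contact point coincides with an intersection point (it is then an endpoint of two adjacent sub-arcs and risks being counted twice); your count implicitly assumes this away.
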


We now introduce two fundamental conditions that enable the application of Theorem \ref{thm:count} to control the number of intersections between $\gamma_b$ and either the curve $\widehat{\mathcal{O}}_{L}^b$ or $\widehat{\mathcal{O}}_{R}^b$. First, define
\[
\mathcal{O}_{L}:=\{(y_0, y_L(y_0)) : y_0 \in \Int(I_L)\} \quad \text{and} \quad \mathcal{O}_{R}^b:=\{(y_0, y_R^b(y_0)) : y_0 \in \Int(I_R^b)\}.
\]
As noted in \cite{Carmona2023}, it is always possible to define a simply connected domain $U_b \subset Q$ and $\e > 0$ for which one of the following conditions holds: 
\begin{enumerate}[label = {$(S_L)$}]
\item\label{relSL} 
for all $b \in (b^* - \e, b^* + \e)$, $\widehat{\mathcal{O}}_{L}^b \subset U_b$ and $\mathcal{O}_{L} \cap U_b$ is a separating solution of $$\widehat{X}_L^b(y_0,y_1) := X_L(y_0,y_1)\big|_{U_b}.$$
\end{enumerate}
\begin{enumerate}[label = {$(S_R)$}]
\item\label{relSR} 
for all $b \in (b^* - \e, b^* + \e)$, $\widehat{\mathcal{O}}_{R}^b \subset U_b$ and $\mathcal{O}_{R}^b \cap U_b$ is a separating solution of $$\widehat{X}_R^b(y_0,y_1) := X_R(y_0-b,y_1-b)\big|_{U_b}.$$
\end{enumerate}
Some necessary information for constructing $U_b$ is provided in Remark \ref{ILLR}. 
For instance, in Section \ref{sec:example}, we will see that for the three worked examples it is enough to choose $U_b=Q$. However, for other cases it may be necessary to choose smaller regions.

According to Theorem \ref{thm:count}, we need to control the contact points between $\gamma_b$ and $\widehat X_L^b$ or $\widehat X_R^b$. For this sake, we define the following functions:
 \begin{equation}\label{GLGRdefini}
G_L^b(y_0,y_1)=\langle\nabla F_b(y_0,y_1),\widehat{X}_L^b(y_0,y_1)\rangle\quad \text{and}\quad G_R^b(y_0,y_1)=\langle\nabla F_b(y_0,y_1),\widehat{X}_R^b(y_0,y_1)\rangle.
 \end{equation}
The contact points between $\gamma_b$ and $\widehat X_L^b$ correspond to solutions in $U_b$ of the system: 
\begin{equation}\label{eq:polsystem}
 F_b(y_0,y_1)=0 \,\text{ and }\,  G_L^b(y_0,y_1)=0.
 \end{equation}
 Analogously, the contact points between $\gamma_b$ and $\widehat X_R^b$ correspond to solutions in $U_b$ of the system 
\begin{equation*}\label{eq:polsystemR}
 F_b(y_0,y_1)=0 \,\text{ and }\,  G_R^b(y_0,y_1)=0.
 \end{equation*}

\begin{remark}\label{rem:equiv}
We notice that if $(y_0,y_1)$ satisfies $F(y_0,y_1)=0$, then 
 \[
y_1 W_L(y_0) G_R^b(y_0,y_1)=(y_1-b) W_R(y_0-b)G_L^b(y_0,y_1).
 \]
 \end{remark}
 
In the sequel, in light of this Remark \ref{rem:equiv}, the relationship between $G_R^b$ and $G_L^b$ allow us to work only with system \eqref{eq:polsystem}. For this sake, denote $G_b=G_L^b$. One can see that
 \[
\begin{aligned}
G_b(y_0,y_1)=& n_1 (y_0 + y_1) + n_2  y_0 y_1 + n_3 (y_0^2 + y_1^2) + n_4 (y_0^2 y_1 + y_0 y_1^2) +
  n_5 (y_0^3 + y_1^3)\\
  & + n_6 y_0^2 y_1^2 + n_7 (y_0^3 y_1 + y_0 y_1^3) + 
 n_8 (y_0^3 y_1^2 + y_0^2 y_1^3) + n_9 y_0^3 y_1^3,
\end{aligned}
 \] 
 where $n_i,$ $i=1,\ldots 9,$ are given by
 \[
 \begin{aligned}
 n_1 =& a_L^4 b (2 b D_R + a_R T_R) ,\\
n_2 =& -2 a_L^3 b (2 a_L D_R + 2 b D_R T_L + a_R T_L T_R) ,\\
n_3 =& a_L^2 (a_R^2 (b D_L + a_L T_L) + b D_R (-3 a_L^2 + b^2 D_L - a_L b T_L) + 
     a_R (-a_L^2 T_R + b^2 D_L T_R)) ,\\
n_4 =& a_L (2 a_L^3 D_R + a_L^2 T_L (7 b D_R + a_R T_R) - 
     b D_L T_L (a_R^2 + b^2 D_R + a_R b T_R) \\&- 
     a_L (-b^2 D_R T_L^2 + a_R^2 (2 D_L + T_L^2) + a_R b D_L T_R)) ,\\
n_5 =& a_L^2 (-a_R^2 D_L + D_R (a_L^2 - b^2 D_L + a_L b T_L) - a_R b D_L T_R) ,\\
n_6 =& 2 (a_R^2 D_L (b D_L + 3 a_L T_L) + 
     D_R (b^3 D_L^2 - 2 a_L^3 T_L + a_L b^2 D_L T_L - 
        a_L^2 b (3 D_L + 2 T_L^2))\\& + 
     a_R D_L (-a_L^2 + b^2 D_L + 2 a_L b T_L) T_R) ,\\
n_7 =& a_L (a_R^2 D_L T_L + D_R T_L (-3 a_L^2 + b^2 D_L - a_L b T_L) + 
     a_R D_L (2 a_L + b T_L) T_R) ,\\
n_8 =& -3 a_R^2 D_L^2 + 
     D_R (-3 b^2 D_L^2 + a_L b D_L T_L + a_L^2 (3 D_L + 2 T_L^2)) - 
     a_R D_L (3 b D_L + 2 a_L T_L) T_R ,\\
n_9 =& 4 D_L (b D_L D_R - a_L D_R T_L + a_R D_L T_R) .
 \end{aligned}
 \]

Now, we proceed to simplify the polynomial system \eqref{eq:polsystem} constrained to $U_b\subset Q$. To do that, we consider the following map
 \[(Y_0,Y_1)=\phi(y_0,y_1):=(y_0+y_1,y_0 y_1),\] 
that corresponds to a diffeomorphism from $Q$ (the open fourth quadrant) onto $H=\{(Y_0,Y_1):Y_1<0\}$ (an open half-plane). This map transforms the polynomial system \eqref{eq:polsystem} restricted to $U_b$ into the following constraint polynomial system
\begin{equation}\label{eq:polsystem2}
\widetilde F_b(Y_0,Y_1)=0 \,\text{ and }\,  \widetilde G_b(Y_0,Y_1)=0,\,\, (Y_0,Y_1)\in \phi(U_b)\subset H,
 \end{equation}
where, now, $\widetilde F_b$  and $\widetilde G_b$ are, respectively, the following polynomial functions of degrees $2$ and $3$:
\begin{equation}\label{finalpolynomials}
\begin{aligned}
\widetilde F_b(Y_0,Y_1)=& m_0 + m_1 Y_0 + (m_2 - 2 m_3) Y_1+ m_3 Y_0^2  + m_4 Y_0 Y_1 + m_5 Y_1^2,\\
 \widetilde G_b(Y_0,Y_1)=&n_1 Y_0+ (n_2 - 2 n_3) Y_1 + n_3 Y_0^2 + (n_4 - 3 n_5) Y_0 Y_1+ (n_6 - 2 n_7) Y_1^2\\
 &+ n_5 Y_0^3 + n_7 Y_0^2 Y_1  + n_8 Y_0 Y_1^2 + n_9 Y_1^3.
\end{aligned}
\end{equation}

Thus, we have the following proposition.
\begin{proposition}\label{prop:hlc}
Let $U_b\subset Q$ be a simply connected domain and $\e>0$ for which one of the conditions \ref{relSL} or \ref{relSR} holds. Assume that there exists $b\in(b^*-\e,b^*+\e)$ for which the set $\widetilde \gamma_b=\widetilde F_b^{-1}(\{0\})=\phi(\gamma_b)$ has at most $N$ noncompact connected components  (and any number of compact ones) in $\phi(U_b)$ and  the constraint polynomial system \eqref{eq:polsystem2} has at most $k$ real isolated solutions (non-isolated solutions are allowed). Then, the amount of hyperbolic limit cycles of the differential system \eqref{1-cf} and, consequently,  the amount of simple zeros of the displacement function $\delta_b$ are bounded by $N+k+1$.
\end{proposition}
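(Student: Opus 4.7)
My plan is to combine Remark \ref{key}, hypothesis \ref{relSL} (or \ref{relSR}), and the extended Khovanski\u{\i} statement Theorem \ref{thm:count}, and then transfer the resulting counts from the $(y_0,y_1)$-picture in $U_b \subset Q$ to the $(Y_0,Y_1)$-picture in $\phi(U_b) \subset H$ via the diffeomorphism $\phi(y_0,y_1) = (y_0+y_1, y_0 y_1)$.

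I would first assume that hypothesis \ref{relSL} holds; the case \ref{relSR} is analogous, replacing $G_b = G_L^b$ by $G_R^b$ throughout and using Remark \ref{rem:equiv} to check that the corresponding contact-point equations cut out the same isolated real solutions on the zero set of $F_b$ (away from the coordinate axes). By Remark \ref{key}, the number of simple zeros of $\delta_b$ in $\mathrm{int}(I_b)$ is at most one plus the number of isolated intersection points between the algebraic curve $\gamma_b$ and the graph $\widehat{\mathcal{O}}_L^b$ inside $U_b$. Hypothesis \ref{relSL} identifies $\widehat{\mathcal{O}}_L^b$ as a subset of the separating solution $\mathcal{O}_L \cap U_b$ of the polynomial vector field $\widehat X_L^b$ on the simply connected domain $U_b$, so every such intersection is in particular an isolated intersection of $\gamma_b$ with that separating solution. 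Theorem \ref{thm:count} then yields the bound $\mathcal{N}_0 + \mathcal{K}_0$, where $\mathcal{N}_0$ is the number of noncompact connected components of $\gamma_b$ in $U_b$ and $\mathcal{K}_0$ is the number of isolated contact points between $\gamma_b$ and $\widehat X_L^b$ in $U_b$, namely the isolated solutions in $U_b$ of the system $F_b = G_b = 0$ in \eqref{eq:polsystem}.

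To conclude, I must show $\mathcal{N}_0 \leq N$ and $\mathcal{K}_0 \leq k$. A direct computation shows that both $F_b$ and $G_b$ are symmetric under the transposition $(y_0,y_1)\mapsto(y_1,y_0)$: this is immediate for $F_b$ from its explicit form, and for $G_b = \langle \nabla F_b,\widehat X_L^b\rangle$ it follows because the transposition simultaneously swaps the components of $X_L(y_0,y_1) = -(y_1 W_L(y_0), y_0 W_L(y_1))$ and of $\nabla F_b$. Hence $F_b$ and $G_b$ factor through $\phi$, yielding precisely the polynomials $\widetilde F_b$ and $\widetilde G_b$ displayed in \eqref{finalpolynomials}. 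Since $\phi$ is a diffeomorphism from $Q$ onto $H$, it induces a homeomorphism between $\gamma_b \cap U_b$ and $\widetilde\gamma_b \cap \phi(U_b)$ preserving the number of noncompact components, giving $\mathcal{N}_0 \leq N$, and a bijection between the solutions of \eqref{eq:polsystem} in $U_b$ and of \eqref{eq:polsystem2} in $\phi(U_b)$ preserving isolatedness, giving $\mathcal{K}_0 \leq k$. Assembling the estimates yields the advertised bound $N + k + 1$ on the number of hyperbolic limit cycles.

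The main obstacle will be the fine print concerning non-isolated pieces of the contact locus: the proposition only counts \emph{isolated} real solutions of \eqref{eq:polsystem2}, whereas the bound $\mathcal{N}_0 + \mathcal{K}_0$ coming from Theorem \ref{thm:count} treats contact points as isolated phenomena. One has to argue that a non-isolated component of the contact variety cannot produce additional isolated intersections with $\widehat{\mathcal{O}}_L^b$; this should follow because such a component would force the algebraic curve $\gamma_b$ to share an arc with an analytic orbit of the polynomial vector field $\widehat X_L^b$, which is incompatible with the separating-solution hypothesis, or else contributes only non-isolated intersections that are irrelevant for the counting in Remark \ref{key}. Once this technicality is checked, the inequalities $\mathcal{N}_0 \leq N$ and $\mathcal{K}_0 \leq k$ close the argument.
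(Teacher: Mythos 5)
Your argument for the case in which every real solution of \eqref{eq:polsystem2} is isolated is correct and coincides with the first half of the paper's proof: the symmetry of $F_b$ and $G_b$ under $(y_0,y_1)\mapsto(y_1,y_0)$ is indeed why they descend through $\phi$ to $\widetilde F_b,\widetilde G_b$, the counts transfer because $\phi|_Q$ is a diffeomorphism onto $H$, Theorem \ref{thm:count} applied to the separating solution $\mathcal{O}_L\cap U_b$ gives the bound $\mathcal{N}_0+\mathcal{K}_0\leq N+k$ on isolated intersections, and Remark \ref{key} adds one.

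The gap is in your last paragraph, which is exactly the case the proposition is written to allow (``non-isolated solutions are allowed''). Two problems. First, your claim that a non-isolated component of the contact locus ``is incompatible with the separating-solution hypothesis'' is false: condition \ref{relSL} constrains only the orbit $\mathcal{O}_L\cap U_b$, and nothing prevents $\gamma_b$ from containing a \emph{different} orbit of $\widehat X_L^b$ --- which is precisely what happens when $F_b$ and $G_b$ share a factor. Second, dismissing the non-isolated contacts as irrelevant to Remark \ref{key} does not rescue the count: Theorem \ref{thm:count} requires the curve to have at most $k$ contact points \emph{in total}, so when the contact locus is infinite the theorem simply does not apply to $\gamma_b$ on $U_b$, and you never obtain the bound $\mathcal{N}_0+\mathcal{K}_0$. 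The paper closes this by a case analysis on the common factor. If $\widetilde F_b$ divides $\widetilde G_b$, then all of $\gamma_b\cap U_b$ is invariant under $\widehat X_L^b$, hence meets $\widehat{\mathcal{O}}_{L}^b$ in no isolated point, and the bound is $1$. If they share only a linear factor, then $\widetilde F_b$ splits into two lines $\ell_1\cup\ell_2$ with $\ell_1\subset\widetilde G_b^{-1}(\{0\})$; by Remark \ref{rem:equiv}, $\phi^{-1}(\ell_1)\cap U_b$ is a common orbit of $\widehat X_L^b$ and $\widehat X_R^b$, so either $\widehat{\mathcal{O}}_{L}^b$ lies inside it (forcing the absence of limit cycles altogether), or one replaces $U_b$ by the connected component of $U_b\setminus\phi^{-1}(\ell_1)$ containing $\widehat{\mathcal{O}}_{L}^b$, on which condition \ref{relSL} persists, the contact locus becomes finite with at most $k$ points, and the component count stays at most $N$, so Theorem \ref{thm:count} applies there. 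Some argument of this kind is needed to complete your proof.
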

\begin{proof}
Let us prove this proposition assuming that condition \ref{relSL} holds. The reasoning for condition \ref{relSR} follows analogously.

First, suppose that all solutions of the constraint polynomial system \eqref{eq:polsystem2} are isolated. By reversing the transformation $\phi$, we find that $\gamma_b$ contains no more than $N$ noncompact connected components in $U_b$. Additionally, the polynomial system \eqref{eq:polsystem} has at most $k$ real solutions in $U_b$, which implies that $\gamma_b$ has no more than $k$ contact points with $\widehat X_L^b$ in $U_b$. Thus,  by Theorem \ref{thm:count}, the number of isolated intersections between $\gamma_b$ and $\mathcal{O}_{L}$, and therefore with $\widehat{\mathcal{O}}_{L}^b$, is bounded by $N+k$. Considering Remark \ref{key}, we conclude that the number of hyperbolic limit cycles of the differential system \eqref{1-cf}, and hence the number of simple zeros of $\delta_b$, is bounded by $N+k+1$.

Now, assume that the constraint polynomial system \eqref{eq:polsystem2} has non-isolated solutions. We analyze this case under two possible scenarios: (i) $\widetilde F_b$ is a factor of $\widetilde G_b$, or (ii) $\widetilde F_b$ and $\widetilde G_b$ share only a common linear factor.

In the first scenario, where $k = 0$, reversing the transformation $\phi$ reveals that $F_b$ is a factor of $G_b$. Consequently, from \eqref{GLGRdefini}, we deduce that $U_b\cap \gamma_b$ is an orbit of $\widehat X_L$. This implies that $\widehat{\mathcal{O}}_{L}^b$ do not intersect $\gamma_b$ at isolated points. Therefore, by Remark \ref{key}, the number of hyperbolic limit cycles of the differential system \eqref{1-cf}, and hence the number of simple zeros of $\delta_b$, is bounded by $1 \leq 1 + k+N$. 

In the second scenario, $\widetilde F_b$ and $\widetilde G_b$ share a common linear factor. This implies that $\widetilde F_b$ can be factored into two linear components, each one defining a straight line, denoted by $\ell_1$ and $\ell_2$, as its zero set. Consequently, the zero set of $\widetilde F_b$ is given by $\widetilde \gamma_b = \ell_1 \cup \ell_2$. In this scenario, one of these straight lines, say $\ell_1$, is also contained in the zero set $\widetilde G_b^{-1}(\{0\})$ and the other one, $\ell_2$, intersects $\widetilde G_b^{-1}(\{0\})$ at most in $k$ points. 
Reversing the transformation $\phi$, and  taking \eqref{GLGRdefini} and Remark \ref{rem:equiv} into account, we find that $ U_b\cap\phi^{-1}(\ell_1)$ is an orbit common to both vector fields $\widehat X_L$ and $\widehat X_R$. Accordingly, if $\widehat{\mathcal{O}}_{L}^b$ intersects $\phi^{-1}(\ell_1)$, then it must be entirely contained within $\phi^{-1}(\ell_1)$ and, consequently, also corresponds to an orbit of $\widehat{X}_R$ that, therefore, does not intersect $\widehat{\mathcal{O}}_R^b$ at isolated points, implying the absence of limit cycles. On the other hand, if $\widehat{\mathcal{O}}_L^b$ does not intersect $\phi^{-1}(\ell_1)$, we can adapt the set $U_b$ by defining $U_b' = \phi^{-1}(V_b)$, where $V_b$ is the connected component of $\phi(U_b) \setminus \ell_1$ that contains $\phi(\widehat{\mathcal{O}}_L^b)$. We notice that $U_b'$ still satisfies condition \ref{relSL}. In addition, since both $\ell_1$ and $\ell_2$ are straight lines, the number of noncompact connected components of $\widetilde \gamma_b$ within $V_b$ is still at most $N$. Consequently, the number of connected components of $\gamma_b$ within $U_b'$ is also at most $N$. Additionally, the polynomial system \eqref{eq:polsystem} has, now, at most $k$ real solutions in $U_b'$, implying that $U'_b\cap\gamma_b$ has no more than $k$ contact points with $\widehat X_L^b$. Hence, by Theorem \ref{thm:count}, the number of isolated intersections between $\gamma_b$ and $\widehat{\mathcal{O}}_L^b$ in $U_b'$ is bounded by $N + k$. Finally, by considering Remark \ref{key}, we conclude that the number of hyperbolic limit cycles of the differential system \eqref{1-cf}, and therefore the number of simple zeros of $\delta_b$, is bounded by $N + k + 1$.

\end{proof}

\subsection{Bounding the number of crossing limit cycles}\label{sec:clc}

With the purpose of bounding the number of crossing limit cycles of \eqref{1-cf}, we rely on the following technical lemma. It ensures that if any function in a one-parameter family of analytic functions has at most $N$ simple zeros, then, under a suitable monotonicity condition on the parameter, every function in the family has no more than $N$ isolated zeros.

\begin{lemma}[{\cite[Lemma 5]{Carmona2023}}]\label{lem:semi}
Let $I,J\subset\R$ be open intervals and consider a smooth function $\delta:I\times J\to \R$. Assume that
\begin{itemize}
\item[i.] for each $b\in J$, the function $\delta(\cdot,b)$ is analytic;
\item[ii.] there exists a natural number $N$ such that, for each $b\in J$, the number of simple zeros of the function $\delta(\cdot,b)$ does not exceed $N$; and 
\item[iii.] $\dfrac{\p \delta}{\p b}(u,b)>0$, for every $(u,b)\in I\times J$.
\end{itemize}
Then, for each $b\in J$, the function $\delta(\cdot,b)$ has at most $N$ isolated zeros.
\end{lemma}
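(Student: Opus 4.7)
The plan is a proof by contradiction. Suppose some $b_0\in J$ admits $k\ge N+1$ isolated zeros $u_1<u_2<\cdots<u_k$ of $\delta(\cdot,b_0)$ in $I$. Since $\delta(\cdot,b_0)$ is analytic by (i), each $u_i$ has a finite multiplicity $m_i\ge 1$, so one may write
\[
\delta(u,b_0)=c_i(u-u_i)^{m_i}+O\!\left((u-u_i)^{m_i+1}\right),\qquad c_i\ne 0.
\]
Choose pairwise disjoint open intervals $J_i\ni u_i$ with $\overline{J_i}\subset I$ so small that $u_i$ is the only zero of $\delta(\cdot,b_0)$ in $\overline{J_i}$, and write $K_i:=\partial_b\delta(u_i,b_0)>0$ using (iii).

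The heart of the argument is a local bifurcation count: perturb $b$ slightly and read off the simple zeros of $\delta(\cdot,b)$ that bifurcate from each $u_i$. Using the expansion
\[
\delta(u,b)=c_i(u-u_i)^{m_i}+K_i(b-b_0)+\text{higher order},
\]
the rescaling $u-u_i\sim|b-b_0|^{1/m_i}$ combined with the implicit function theorem applied to the rescaled equation yields that, for all sufficiently small $\epsilon>0$ and every $b\in(b_0-\epsilon,b_0+\epsilon)\setminus\{b_0\}$, the zeros of $\delta(\cdot,b)$ inside $J_i$ are exactly: one simple zero when $m_i$ is odd; two simple zeros when $m_i$ is even and $c_i(b-b_0)<0$; and no zero when $m_i$ is even and $c_i(b-b_0)>0$. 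Simplicity follows from $\partial_u\delta$ being nonzero at each bifurcated zero (the leading term $m_i c_i(u-u_i)^{m_i-1}$ is nonzero there).

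Let $k_{\mathrm{odd}}$, $k_+$, $k_-$ denote the numbers of indices $i$ with $m_i$ odd, with $m_i$ even and $c_i>0$, and with $m_i$ even and $c_i<0$, respectively, so $k_{\mathrm{odd}}+k_++k_-=k$. The local count then shows that $\delta(\cdot,b_0-\epsilon)$ has at least $k_{\mathrm{odd}}+2k_+$ simple zeros, while $\delta(\cdot,b_0+\epsilon)$ has at least $k_{\mathrm{odd}}+2k_-$, both inside $\bigcup_i J_i\subset I$. Since $\max(2k_+,2k_-)\ge k_++k_-$, at least one of these counts is $\ge k\ge N+1$, contradicting hypothesis (ii) for the corresponding value of $b$, and completing the proof.

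The main obstacle is precisely the local bifurcation claim near each $u_i$: when $m_i$ is large, one must justify that the exact number of simple zeros of $\delta(\cdot,b)$ in $J_i$ is dictated by the leading binomial $c_i(u-u_i)^{m_i}+K_i(b-b_0)$, rather than being distorted by the higher-order terms. This is standard for analytic germs (via Weierstrass preparation, a Newton-polygon argument, or an explicit estimate on the rescaled remainder), but it is the place where care is needed. All the remaining pieces — extracting the disjoint intervals $J_i$, choosing $\epsilon$ uniformly, the combinatorial max inequality, and the final contradiction with (ii) — are routine once the local bifurcation picture is established.
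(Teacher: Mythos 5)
The paper does not actually prove this lemma: it is imported verbatim as \cite[Lemma 5]{Carmona2023}, so there is no in-paper proof to compare against. Judged on its own, your argument is correct and self-contained. The contradiction scheme (split each isolated zero $u_i$ of multiplicity $m_i$ into simple zeros by moving $b$ off $b_0$, then observe that $\max(k_{\mathrm{odd}}+2k_+,\,k_{\mathrm{odd}}+2k_-)\ge k\ge N+1$) is sound, and the one delicate point is exactly the one you flag: the local count and the simplicity of the bifurcated zeros. That step is genuinely fine here because hypothesis (iii) gives the clean structure $\delta(u,b)=\delta(u,b_0)+(b-b_0)\,g(u,b)$ with $g=\int_0^1\partial_b\delta(u,b_0+t(b-b_0))\,dt>0$, so the perturbation is a sign-definite, nearly constant term; zeros of $\delta(\cdot,b)$ near $u_i$ can only occur where $|u-u_i|\sim|b-b_0|^{1/m_i}$, and there $|\partial_u\delta|\sim|b-b_0|^{(m_i-1)/m_i}$ dominates the $O(|b-b_0|)$ contribution of the perturbation, which justifies both the count and the simplicity (note that for the contradiction you only need ``at least,'' not ``exactly,'' which spares you the full Newton-polygon analysis). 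A slicker way to obtain simplicity, which I believe is closer to the cited source, is to use (iii) and the implicit function theorem to write the zero set of $\delta$ as the graph of a smooth function $b=\beta(u)$, so that zeros of $\delta(\cdot,b)$ are preimages $\beta^{-1}(b)$ and simple zeros are preimages at which $\beta'\neq0$; Sard's theorem then supplies nearby regular values $b'$ with at least $N+1$ transversal preimages, avoiding the multiplicity/rescaling discussion entirely. Either route works; yours costs a local blow-up estimate but makes the mechanism of zero splitting completely explicit.
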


Now, we are in position to extend Proposition \ref{prop:hlc} for crossing limit cycles in general, not only hyperbolic, as follows:
\begin{theorem}\label{thm:method}
Assume that either condition \ref{relSL} or condition \ref{relSR} holds. Suppose that there exists $\e>0$ such that, for each $b\in(b^*-\e, b^*+\e)$, $\widetilde \gamma_b=\widetilde F_b^{-1}(\{0\})$ has at most $N$ noncompact (and any number of compact) connected components in $\phi(U_b)$ and that the constraint polynomial system \eqref{eq:polsystem2} has at most $k$ isolated real solutions (non-isolated solutions are allowed). Then, the piecewise linear differential system \eqref{1-cf} has at most $N+k+1$ crossing limit cycles.
\end{theorem}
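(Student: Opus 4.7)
The plan is to promote the bound from Proposition~\ref{prop:hlc}, which controls only the \emph{simple} zeros of the displacement function, to a bound on all isolated zeros by invoking Lemma~\ref{lem:semi}. To this end, I would consider the two-variable map
\[
\delta(y_0,b)=y_R(y_0-b)+b-y_L(y_0),
\]
defined on an appropriate product $I\times J$ with $J=(b^*-\e',b^*+\e')$ for some $0<\e'\leq\e$, and $I$ chosen as an open subinterval of $\operatorname{int}(I_b)$ valid uniformly for $b\in J$ (shrinking $\e'$ if necessary so that the intervals $I_b=I_L\cap(I_R+b)$ vary continuously and share a common nonempty open subinterval, otherwise there is nothing to prove since $\operatorname{int}(I_b)=\emptyset$ forces no crossing periodic solutions).

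Next, I would verify the three hypotheses of Lemma~\ref{lem:semi} in turn. Hypothesis~(i), the analyticity of $\delta(\cdot,b)$ for each fixed $b$, is inherited from the analyticity of the Poincar\'e half-maps $y_L$ and $y_R$ on the interior of their domains of definition, as recorded in the integral characterization of~\cite{CarmonaEtAl19, Carmona2022-mc}. Hypothesis~(ii) is precisely the content of Proposition~\ref{prop:hlc} applied for every $b\in J$: under either~\ref{relSL} or~\ref{relSR}, together with the uniform bounds of $N$ noncompact components of $\widetilde\gamma_b$ in $\phi(U_b)$ and $k$ isolated solutions of~\eqref{eq:polsystem2}, each function $\delta(\cdot,b)$ has at most $N+k+1$ simple zeros.

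The critical step is hypothesis~(iii), the strict monotonicity $\partial\delta/\partial b>0$. Differentiating directly,
\[
\dfrac{\p\delta}{\p b}(y_0,b)=1-y_R'(y_0-b).
\]
Using the second ODE in~\eqref{eq:odeL} with $y_0>0$ fixed and $y_1=y_R(y_0)<0$, together with the positivity $W_R(y_0),W_R(y_1)>0$ on $\operatorname{ch}(I_R\cup y_R(I_R))\setminus\{0\}$ established after~\eqref{eq:poly}, one obtains $y_R'<0$ on $\operatorname{int}(I_R)$, hence $\partial\delta/\partial b>1>0$ throughout $I\times J$. This verifies hypothesis~(iii).

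With the three hypotheses in hand, Lemma~\ref{lem:semi} yields that for each $b\in J$, and in particular for $b=b^*$, the displacement function $\delta(\cdot,b)$ has at most $N+k+1$ isolated zeros. Since crossing limit cycles of~\eqref{1-cf} are in bijective correspondence with isolated zeros of $\delta_b$ in $\operatorname{int}(I_b)$, the theorem follows. The main technical obstacle is the monotonicity calculation: although it reduces to the sign of $y_R'$, carefully justifying that this sign is uniform on the relevant subdomain, and ensuring that the common interval $I$ can indeed be chosen independently of $b\in J$, requires the structural information on $I_L$, $I_R$, $\mu_{L,R}$, and $\lambda_{L,R}$ recalled in Remark~\ref{ILLR}.
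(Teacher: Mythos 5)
Your proposal is correct and follows essentially the same route as the paper: both deduce the bound on isolated zeros from the simple-zero bound of Proposition~\ref{prop:hlc} via Lemma~\ref{lem:semi}, using the monotonicity $\partial\delta/\partial b=1-y_R'(y_0-b)>0$, which holds because \eqref{eq:odeL} forces $y_R'<0$ on the relevant domain. The only cosmetic difference is that the paper organizes the choice of the common interval $I$ through a contradiction argument (placing $N+k+2$ putative isolated zeros of $\delta_{b^*}$ inside a fixed $I\subset\Int(I_{b^*})$ and then shrinking $J$), which is exactly the uniformity issue you flag at the end.
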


\begin{proof}[Proof of Theorem \ref{thm:method}] Let us prove this theorem assuming that condition \ref{relSL} holds. The reasoning for condition \ref{relSR} follows analogously.

In what follows, we use Lemma \ref{lem:semi} to establish that $\delta_{b^*}$ has no more than $N+k+1$ isolated zeros. From Proposition \ref{prop:hlc}, we know that for every $b \in (b^* - \e, b^* + \e)$, the function $\delta_b$ has at most $N+k+1$ simple zeros. Proceeding by contradiction, assume that $\delta_{b^*}$ has more than $N+k+1$ isolated zeros. Let $I \subset \Int(I_{b^*})$ be an open interval containing at least $N+k+2$ isolated zeros of $\delta_{b^*}$. Considering that $I_b = I_L \cap (I_R + b)$, let $J$ be an interval such that $b^* \in J \subset (b^* - \e, b^* + \e)$ and $I \subset \Int(I_b)$ for every $b \in J$.

Since for each $b \in J$, the restriction of the displacement function $\delta_b |_I$ has no more than $N+k+1$ simple zeros, and
\[
\dfrac{\partial}{\partial b} \delta_b(y_0) = -y_R'(y_0 - b) + 1 > 0,
\,\text{ for }\, y_0 \in I \,\text{ and }\, b \in J,
\]
it follows from Lemma \ref{lem:semi} that for each $b \in J$, $\delta_b |_I$ also has no more than $N+k+1$ isolated zeros. This contradicts the initial assumption that $\delta_{b^*}\in J$ has more than $N+k+1$ isolated zeros, thus completing the proof.

\end{proof}

\subsection{Outline of the methodology}\label{outline} 
We now summarize the preceding discussion by outlining the methodology used to bound the number of crossing limit cycles of system~\eqref{s1}, under the assumption that the following set of conditions holds:
\[
(H):\begin{cases}
 a_{12}^{L}a_{12}^{R}>0;\\
a_L\leq 0\,\text{ and }\, 4D_L-T_L^2>0,\, \text{ or }\, a_L>0;\\
a_R\geq 0\,\text{ and }\, 4D_R-T_R^2>0,\, \text{ or }\, a_R<0.
\end{cases}
\]
If condition (H) is not satisfied, then system~\eqref{s1} does not exhibit limit cycles.

\begin{itemize}
\item[{\bf Step 1:}] Transform system \eqref{s1} into the canonical form \eqref{cf} and consider the 1-parameter family of piecewise linear systems \eqref{1-cf}.
\item[{\bf Step 2:}] Establish a simply connected domain $U_b\subset Q$ and $\e>0$ for which either the condition \ref{relSL} or  \ref{relSR} holds.
\item[{\bf Step 3:}] Compute an upper limit $k$ for the number of real solutions of the constraint polynomial system \eqref{eq:polsystem2} for every $b\in(b^*-\e, b^*+\e)$.
\item[{\bf Step 4:}] Compute an upper limit $N$ for the number of noncompact connected components of $\widetilde \gamma_b=\widetilde F_b^{-1}(\{0\})$  in $\phi(U_b)$ for every $b\in(b^*-\e, b^*+\e)$.
\item[{\bf Step 5:}] Apply Theorem \ref{thm:method} to conclude that the piecewise linear differential system \eqref{s1} has no more than $N+k+1$ crossing limit cycles.
\end{itemize}

\subsection{Some remarks on the methodology}

\begin{remark}\label{improveU}
In Steps 3 and 4, in order to obtain smaller values for $N$ and $k$ and, consequently, improve the conclusion of Step 5, the set $U_b$ can eventually be modified and the value for $\e$ can be taken smaller as long as condition \ref{relSL} or \ref{relSR} remains true.
\end{remark}

\begin{remark}\label{conic}
Consider the coefficients $m_i$ given by \eqref{conic-coef}. If $m_3^2+m_4^2+m_5^2=0$, then $\widetilde F_b$ has degree $1$ and $\widetilde \gamma_b$ is a straight line. Otherwise, if $m_3^2+m_4^2+m_5^2\neq0$, then the set $\widetilde \gamma_b=\widetilde F_b^{-1}(\{0\})=\phi(\gamma_b)$ is a conic that can be analyzed by means of the discriminant 
\[
\Delta_b=-\det(H_{\widetilde F_b})=\left(D_R \left(a_L^2-a_L b T_L+b^2 D_L\right)+a_R^2 D_L+a_R b D_L T_R\right)^2-4 a_L^2 a_R^2 D_L D_R,
\]
where $H_{\widetilde F_b}$ is the Hessian of $\widetilde F_b$. Indeed: if $\Delta_b>0$, then $\widetilde \gamma_b$ is a hyperbola or two transversal intersecting straight lines; if $\Delta_b=0$, then $\widetilde \gamma_b$ is either a parabola, or two parallel straight lines, which could be coincident or not, or the empty set; and if $\Delta_b<0$, then $\widetilde \gamma_b$ is an ellipse or a single point. Finally, we notice that $m_3^2+m_4^2+m_5^2=0$ implies that $\Delta_b=0$. Thus, $\Delta_b\neq0$ actually implies $\widetilde \gamma_b$ is a conic; and $\Delta_b=0$ implies that $\widetilde \gamma_b$ is either a parabola, or two parallel straight lines, or  a single straight line, or the empty set.
\end{remark}

\begin{remark}\label{hyphalfplane}
For $D_R\neq0$, the equation 
\begin{equation}\label{Fbeq0}
\widetilde F_b(Y_0,0)=0
\end{equation}
 has the following solutions
\[
Y_0=b+\dfrac{a_R}{2D_R}\left(T_R+\sqrt{T_R^2-4D_R}\right)\quad\text{and}\quad Y_0=b+\dfrac{a_R}{2D_R}\left(T_R-\sqrt{T_R^2-4D_R}\right).
\]
Thus, in the case that the right system is a non-boundary focus, that is $T_R^2-4D_R<0$ and $a_R\neq0$, the equation \eqref{Fbeq0} has no real solutions. 
\end{remark}

\begin{remark}\label{knownsolu}
For $D_L\neq0$, the point
\[
(\widehat Y_0,\widehat Y_1)=\left(\dfrac{a_L T_L}{D_L},\dfrac{a_L^2}{D_L}\right) 
\]
is a solution of the polynomial system $\widetilde F_b(Y_0,Y_1)=0$ and  $\widetilde G_b(Y_0,Y_1)=0$.
\end{remark}

\section{Proof of Theorem \ref{main}}\label{sec:example}

We dedicate this section to apply the methodology presented in the previous section to prove Theorem \ref{main}. More specifically, in Subsections \ref{hy-system},  \ref{fptt-system}, and \ref{grs-system} we apply such a methodology to show that system \eqref{s1} with the parameter values given by \eqref{HY-example}, \eqref{FPTT-example}, and \eqref{GRS-example}, respectively, do not have more than three limit cycles. Since it is already known the existence of at least three limit cycles for such systems, the proof of Theorem \ref{main} will follow.

As we will see, the application of the steps outlined above will be very similar among the three examples. In {\bf Step 2}, we will set  $U_b=Q$ for all three examples. Thus, in {\bf Step 4}, we will conclude that $N=1$ for each example as a consequence of Remark \ref{hyphalfplane}.

\subsection{Analysis of the Huan-Yang example}\label{hy-system}
As the {\bf Step 1}, we start by noticing that system \eqref{s1}, with the parameters values given by \eqref{HY-example}, can be transformed into the Li\'enard canonical form \eqref{cf} where 
\begin{equation}\label{coefHY}
\begin{aligned}
&a_{L}=-\dfrac{117}{200},\quad a_R=-\dfrac{2861}{5000},\quad T_L=-\dfrac{3}{10},\quad T_R=\dfrac{19}{250},\\ &D_L=\dfrac{117}{200},\quad D_R=\dfrac{2861}{250000},\quad \text{ and }\quad b^*:=\dfrac{9}{10}.
\end{aligned}
\end{equation}
We also consider the 1-parameter family of piecewise linear systems \eqref{1-cf} for $b$ near $b^*$.

Now, consider the polynomials \eqref{eq:poly}. Taking Remark \ref{ILLR} into account, we see that, in the present case, since $W_L$ and  $W_R$ has no real roots, the intervals of definition $I_L$ and $I_R$ as well as their images $y_L(I_L)$  and  $y_R(I_R)$ are unbounded with $y_L(y_0)$ and $y_R(y_0)$  tending to $-\infty$ as $y_0\to +\infty.$  Moreover, since $a_L<0$, $4D_L-T_L^2>0$, and $T_L<0$, there exists $\lambda_L>0$ such that $I_L=[\lambda_L,+\infty)$ and $y_L(\lambda_L)=0$. Finally, since $a_R<0$ and $T_R>0$, we get $I_R=[0,+\infty)$ and $y_R(0)=0$. Thus, in the present case, $I_b=[\max\{\lambda_L,b\},+\infty)$ and $y_L(I_L)=y_R(I_R)=(-\infty,0]$.

In {\bf Step 2}, we notice that, for
\[
U_b=Q=\{(y_0,y_1)\in  \R^2:\, y_0>0,y_1<0\},
\]
the condition \ref{relSL} is verified for any $\e>0$. Indeed,  $\mathcal{O}_{b}\subset Q$ and, since $\mathcal{O}_{L}\cap Q=\mathcal{O}_{L}$ and $(\ov{\mathcal{O}_L}\setminus\mathcal{O}_L)=\{(\lambda_L,0)\}\subset\partial Q$, we get that $\mathcal{O}_{L}\cap Q$ is a separating solution of the restricted vector field $X_L\big|_{Q}.$ Notice in addition that $\phi(Q)=H=\{(Y_0,Y_1):Y_1<0\}.$

Now, we compute the polynomials $\widetilde F_b$  and $\widetilde G_b$, given by \eqref{finalpolynomials}, as
\[
\begin{aligned}
\widetilde F_b(Y_0,Y_1)=&C_1\Big[  117 b \left(100 b^2-380 b+2861\right)+2340 b (19-10 b) Y_0\\
&-20 \left(1000 b^3-4100 b^2+28025 b+10806\right) Y_1
+11700 b Y_0^2\\
&+500 \left(40 b^2-164 b+1121\right) Y_0 Y_1
+2000 (41-10 b) Y_1^2\Big],\\
 \widetilde G_b(Y_0,Y_1)=& C_2\Big[ 
 13689 b (10 b-19) Y_0 
 -234 \left(1000 b^3-3500 b^2+26885 b+10806\right) Y_1 \\
& +117 \left(1000 b^3-4100 b^2+26855 b+10806\right) Y_0^2 \\
 &+15 \left(-4000 b^3+39800 b^2-164360 b+175371\right) Y_0 Y_1\\
& +200 \left(2000 b^3-7600 b^2+51250 b+48021\right) Y_1^2\\
 &-2925 \left(40 b^2-164 b+1121\right) Y_0^3
 +300 \left(200 b^2-820 b+2407\right) Y_0^2 Y_1\\
& -3000 \left(200 b^2-780 b+5441\right) Y_0 Y_1^2 +80000 (10 b-41) Y_1^3\Big],
\end{aligned}
\]
where $C_1$ and $C_2$ are positive constant factors.

In {\bf Step 3}, by taking $b=b^*$ and computing the resultant between $F_{b^*}(Y_0,Y_1)$ and $G_{b^*}(Y_0,Y_1)$ with respect to the variable $Y_0$, we get
\[
\begin{aligned}
R(Y_1):=&\textrm{Res}_{Y_0}(F_{b^*},G_{b^*})\\
=&-C_3(200 Y_1-117) \Big(1601361472000000 Y_1^5+6760062816990000 Y_1^4\\
&-6284181440066400 Y_1^3+3572696139179193 Y_1^2-7126051666209372 Y_1\\
&+2258133778849572\Big),
\end{aligned}
\]
where $C_3$ is a positive constant factor.
By means of Descartes' Rule of Signs, one can see that $R$ has exactly one strictly negative root and, in addition, the other $5$ roots are strictly positive or complex. This implies that the polynomial system 
\begin{equation}\label{sistestrela}
\widetilde F_{b^*}(Y_0,Y_1)=0 \,\text{ and }\,  \widetilde G_{b^*}(Y_0,Y_1)=0
\end{equation}
has $6$ finite solutions, of which only one is contained in the open half-plane $H=\phi(Q)$ and the other $5$ solutions are contained in the open set $\R^2\setminus \ov{H}$. Now, by B\'{e}zout's theorem, the polynomial system \eqref{eq:polsystem2} has at most 6 solutions. Thus, by taking $\e>0$ sufficiently small, one has that, for each $b\in(b^*-\e,b^*+\e)$, the constraint polynomial system \eqref{eq:polsystem2} is a perturbation of \eqref{sistestrela} restricted to $H$ and, therefore, has at most one solution (in $H$). Then, $k=1$.
 
In {\bf Step 4}, in light of Remark \ref{conic}, in order to analyze the set $\widetilde \gamma_b=\widetilde F_b^{-1}(\{0\})$, we compute the discriminant:
\[
\Delta_b=C_4 \left(8000 b^4-65600 b^3+601600 b^2-1915192 b+6283205\right),
\]
where $C_4$ is a positive constant factor. We notice that for $b=b^*$, $\Delta_{b^*}= C_4 25021273/5>0$ and, therefore, by taking $\e>0$ smaller, one has that, for each $b\in(b^*-\e,b^*+\e)$, $\Delta_b>0$. It implies that, for each $b\in(b^*-\e,b^*+\e)$, the curve $\widetilde \gamma_b=\widetilde F_b^{-1}(\{0\})$ is a hyperbola (possibly degenerate). Thus, from Remark \ref{hyphalfplane}, $\widetilde \gamma_b$ has a single noncompact connected component in $H$ and, then, $N=1$.
 
Finally, in {\bf Step 5}, Applying Theorem \ref{thm:method}, we conclude that the piecewise linear differential system \eqref{s1}-\eqref{HY-example} has at most and, therefore, exactly $N+k+1=3$ crossing limit cycles.

\subsection{Analysis of the example provided by Freire et al.}\label{fptt-system}
In {\bf Step 1}, we transform \eqref{s1}, with parameter values  given in \eqref{FPTT-example} into the Li\'enard canonical form \eqref{cf} where 
\begin{equation}\label{coefHY}
\begin{aligned}
& a_{L}=\dfrac{65}{64},\quad a_R=\dfrac{96440395023695996806}{95571015330487000887},\quad T_L=-\dfrac{1}{4},\quad T_R=\dfrac{3276710}{13106841},\\
& D_L=\dfrac{65}{64},\quad D_R=\dfrac{174473488105306}{171789280999281},\quad\text{and}\quad b^*:=-\dfrac{521068}{1045519}.
\end{aligned}
\end{equation}
We also consider the 1-parameter family of piecewise linear systems \eqref{1-cf} for $b$ near $b^*$.

Now, consider the polynomials \eqref{eq:poly}. Taking Remark \ref{ILLR} into account, we see that, in the present case, analogous to  Huan-Yang example, the polynomials $W_L$ and  $W_R$ has no real roots, thus the intervals of definition $I_L$ and $I_R$ as well as their images $y_L(I_L)$  and  $y_R(I_R)$ are unbounded with $y_L(y_0)$ and $y_R(y_0)$  tending to $-\infty$ as $y_0\to +\infty.$ Moreover, since $a_L>0$ and $T_L<0$, we get $I_L=[0,+\infty)$ and $y_L(0)=0$. Finally, since $a_R>0$, $4D_R-T_R^2>0$, and $T_R>0$, there exists $\lambda_R>0$ such that $I_R=[\lambda_R,+\infty)$ and $y_R(\lambda_R)=0$.  Thus, in the present case, $y_L(I_L)=y_R(I_R)=(-\infty,0]$ and $I_b=[\max\{0,\lambda_R+b\},+\infty)$.

Hence, in {\bf Step 2}, the condition \ref{relSL} is verified for any $\e>0$ provided that
\[
U_b=Q=\{(y_0,y_1)\in  \R^2:\, y_0>0,y_1<0\}.
\]


Now, we have to compute the polynomials $\widetilde F_b$  and $\widetilde G_b$, which in the present case have cumbersome  expressions, so we shall omit them here.

In the {\bf Step 3}, by taking $b=b^*$, we computing the resultant between $F_{b^*}(Y_0,Y_1)$ and $G_{b^*}(Y_0,Y_1)$ with respect to the variable $Y_0$, $R(Y_1):=\textrm{Res}_{Y_0}(F_{b^*},G_{b^*})$. Again, the expression for $R(Y_1)$ is too large, so we will omit it here. In this case, in order to prove that $R(Y_1)$ has at most one strictly negative root, the Descartes' Rule of Signs does not work. However, the Sturm's theorem (see, for instance, \cite{SB02}) can be applied and it implies that $R(Y_1)$ has at exactly one strictly negative root. In addition, that the other $5$ roots are strictly positive or complex. This implies that the polynomial system 
\[
\widetilde F_{b^*}(Y_0,Y_1)=0 \,\text{ and }\,  \widetilde G_{b^*}(Y_0,Y_1)=0
\] 
has $6$ finite solutions, of which only one is contained in the open half-plane $H=\phi(Q)$ and the other $5$ solutions are contained in the open set $\R^2\setminus \ov{H}$. Thus, proceeding analogous to the Huan-Yang example, we obtain that $k=1$.

In {\bf Step 4}, in order to analyze the conic $\widetilde \gamma_b=\widetilde F_b^{-1}(\{0\})$ provided by the quadratic equation $F_b(Y_0,Y_1)=0,$ we compute the discriminant with respect to $Y_0$ of the homogeneous part of degree $2$ of $F_b$, yielding a positive discriminant for every $b$ sufficiently close to $b^*$. Thus, analogous to the Huan-Yang example we conclude that,  for each $b\in(b^*-\e,b^*+\e)$, the curve $\widetilde \gamma_b=\widetilde F_b^{-1}(\{0\})$ is a hyperbola. From Remark \ref{hyphalfplane}, $\widetilde \gamma_b$ has a single noncompact connected component in $H$ and, then, $N=1$.

Finally, in the {\bf Step 5}, Applying Theorem \ref{thm:method}, we conclude that the piecewise linear differential system \eqref{s1}-\eqref{FPTT-example} has at most and, therefore, exactly $N+k+1=3$ crossing limit cycles.

\subsection{Analysis of the example provided by  Gasull et al.}\label{grs-system}

In {\bf Step 1}, we transform system \eqref{s1}, with parameter values  given in \eqref{GRS-example}, into the Li\'enard canonical form \eqref{cf} where 
\begin{equation}\label{coefGRS}
\begin{aligned}
&a_{L}=-\dfrac{637}{625},\quad a_R=-\dfrac{60809}{8000},\quad T_L=-\dfrac{2}{5},\quad T_R=\dfrac{3}{4},\\
& D_L=\dfrac{26}{25},\quad D_R=\dfrac{73}{64},\quad\text{and}\quad b^*:=\dfrac{231}{200}.
\end{aligned}
\end{equation}
We also consider the 1-parameter family of piecewise linear systems \eqref{1-cf} for $b$ near $b^*$.

Notice that the sign of each coefficient in \eqref{coefGRS} coincides with the sign of each coefficient in \eqref{coefHY} for the Huan-Yang example. Thus, in {\bf Step 2}, analogous to the analysis performed in the Huan-Yang example, the condition \ref{relSL} is verified for any $\e>0$ provided that
\[
U_b=Q=\{(y_0,y_1)\in  \R^2:\, y_0>0,y_1<0\}.
\]

Now, we compute the polynomials $\widetilde F_b$  and $\widetilde G_b$, given by \eqref{finalpolynomials}, as
\[
\begin{aligned}
\widetilde F_b(Y_0,Y_1)=&C_1\Big[
31213 b \left(1000000 b^2-4998000 b+50653897\right)\\
&-62426000 b (1000 b-2499) Y_0
31250 \left(1000000 b^2-5390000 b+49655081\right) Y_0 Y_1\\
&-250 \left(125000000 b^3-673750000 b^2+6206885125 b+3106051249\right) Y_1\\
&+31213000000 b Y_0^2-312500000 (100 b-539) Y_1^2\Big],
\\
 \widetilde G_b(Y_0,Y_1)=& C_2\Big[
 7794010952 b (1000 b-2499) Y_0\\
& -62426 \left(125000000 b^3-575750000 b^2+5961983125 b+3106051249\right) Y_1\\
&+31213 \left(125000000 b^3-673750000 b^2+5957181125 b+3106051249\right) Y_0^2\\
&
-6125 \left(250000000 b^3-3258500000 b^2+18032110250 b-25418184099\right) Y_0 Y_1\\
&
+62500 \left(125000000 b^3-624750000 b^2+5693071125 b+6885054778\right) Y_1^2\\
&
-3901625 \left(1000000 b^2-5390000 b+49655081\right) Y_0^3\\
&
+1531250 \left(1000000 b^2-5390000 b+22187641\right) Y_0^2 Y_1\\
&
-3906250 \left(3000000 b^2-15386000 b+144739483\right) Y_0 Y_1^2\\
&
+156250000000 (100 b-539) Y_1^3\Big],
\end{aligned}
\]
where $C_1$ and $C_2$ are positive constant factors.

In the {\bf Step 3}, by taking $b=b^*$ and computing the resultant between $F_{b^*}(Y_0,Y_1)$ and $G_{b^*}(Y_0,Y_1)$ with respect to the variable $Y_0$, we get
\[
\begin{aligned}
R(&Y_1):=\textrm{Res}_{Y_0}(F_{b^*},G_{b^*})\\
=&-C_3(31250 Y_1-31213)\\ 
& \cdot \Big(103119356365203857421875000000000 Y_1^5
+742832966116277404785156250000000 Y_1^4\\ 
&
-1198304836247530573146057128906250 Y_1^3
+1124855330748568052975913837890625 Y_1^2\\ 
&
-3787786101124879021445139913050000 Y_1
+2038344678891458976535215823461357\Big),
\end{aligned}
\]
where $C_3$ is a positive constant factor. The very same analysis performed in the Huan-Yang example also applies here, establishing the existence of $\e>0$ for which the constraint polynomial system \eqref{eq:polsystem2}, for each $b\in(b^*-\e,b^*+\e)$, has at most one solution, providing, then, $k=1$.

In {\bf Step 4}, in order to analyze the conic $\widetilde \gamma_b=\widetilde F_b^{-1}(\{0\})$ provided by the quadratic equation $F_b(Y_0,Y_1)=0,$ we compute the discriminant with respect to $Y_0$ of the homogeneous part of degree $2$ of $F_b$, yielding
\[
\begin{aligned}
\Delta_b=&C_4 \Big(1000000000000 b^4-10780000000000 b^3+132357526000000 b^2\\
&-556816246140000 b+2465627069116561\Big),
\end{aligned}
\]
where $C_4$ is a positive constant factor. We notice that for $b=b^*$, $\Delta_{b^*}>0$.  Thus, analogous to the Huan-Yang example we conclude that,  for each $b\in(b^*-\e,b^*+\e)$, the curve $\widetilde \gamma_b=\widetilde F_b^{-1}(\{0\})$ is a hyperbola. Thus, from Remark \ref{hyphalfplane}, $\widetilde \gamma_b$ has a single noncompact connected component in $H$ and, then, $N=1$.
 
Finally, in the {\bf Step 5}, Applying Theorem \ref{thm:method}, we conclude that the piecewise linear differential system \eqref{s1}-\eqref{GRS-example} has at most and, therefore, exactly $N+k+1=3$ crossing limit cycles.

\section{Methodology applied for the focus-focus case}\label{ff-case}

In this section, we apply the methodology presented in Section \ref{sec:metho} and explored in Section \ref{sec:example} to provide  algebraic criteria for bounding the number of crossing limit cycles in the focus-focus case.

In what follows, let $R(Y_1)$ denote the resultant between $\widetilde F_{b^*}(Y_0,Y_1)$ and $\widetilde G_{b^*}(Y_0,Y_1)$ with respect to $Y_0$.

Our first result provides an upper bound for the number of limit cycles of \eqref{s1} by means of the number of roots of $R(Y_1)$ in $\mathbb{C}\setminus\{Y_1\in\R:Y_1\leq0\}$.

\begin{theorem}\label{focuscasegeneral}
Assume that $T_L^2-4D_L<0$ and $T_R^2-4D_R<0$. Let $\ell$ be the number of roots of $R(Y_1)$ in $\mathbb{C}\setminus\{Y_1\in\R:Y_1\leq0\}$. Then, system \eqref{s1} has no more than $8-\ell$ limit cycles.
\end{theorem}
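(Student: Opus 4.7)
The plan is to apply the methodology of Section \ref{sec:metho} with the natural choice $U_b=Q$, so that $\phi(U_b)=H=\{(Y_0,Y_1):Y_1<0\}$. Under the focus-focus hypotheses, $D_L>0$ and $D_R>0$, and the polynomials $W_L$ and $W_R$ in \eqref{eq:poly} have no real roots (in the non-boundary focus case $a_L,a_R\neq 0$, which is implicit below). By Remark \ref{ILLR}, the intervals $I_L$ and $I_R$ are therefore unbounded, their images $y_L(I_L)$ and $y_R(I_R)$ are unbounded subsets of $(-\infty,0]$, and $\mathcal{O}_L\cap Q=\mathcal{O}_L$ is a separating solution of $X_L|_Q$, exactly as in the three worked examples of Section \ref{sec:example}. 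Hence condition \ref{relSL} holds for every $\e>0$.

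To bound $N$, I combine Remarks \ref{hyphalfplane} and \ref{knownsolu}. From $T_R^2-4D_R<0$ and $a_R\neq 0$, Remark \ref{hyphalfplane} gives $\widetilde\gamma_b\cap\{Y_1=0\}=\emptyset$, so each connected component of the conic $\widetilde\gamma_b$ lies entirely in one of the two open half-planes $\{Y_1>0\}$ or $\{Y_1<0\}$. Remark \ref{knownsolu} guarantees that the point $(a_L T_L/D_L,\,a_L^2/D_L)$ belongs to $\widetilde\gamma_b$, and its second coordinate is strictly positive because $D_L>0$; hence at least one of the (at most two) components of $\widetilde\gamma_b$ sits in $\{Y_1>0\}$. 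It follows that at most one component lies in $H$, giving $N\leq 1$.

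For the bound on $k$, I would use a resultant argument. Since $\widetilde F_b$ and $\widetilde G_b$ have total degrees $2$ and $3$, B\'ezout's theorem limits their affine common zeros, counted with multiplicity, to at most $6$; equivalently, $R(Y_1)=\mathrm{Res}_{Y_0}(\widetilde F_{b^*},\widetilde G_{b^*})$ has degree at most $6$. Each isolated real solution of \eqref{eq:polsystem2} in $H$ projects to a real root of $R$ strictly less than $0$, and by hypothesis such roots number at most $6-\ell$. Since $\mathbb{C}\setminus\{Y_1\leq 0\}$ is open and the roots of $R_b$ depend continuously on $b$, any root of $R_{b^*}$ already in $\mathbb{C}\setminus\{Y_1\leq 0\}$ remains there for $b$ near $b^*$, so the same bound of $6-\ell$ persists for every $b$ in a small interval $(b^*-\e,b^*+\e)$. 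Thus $k\leq 6-\ell$.

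Combining the two bounds, Theorem \ref{thm:method} yields $N+k+1\leq 8-\ell$ crossing limit cycles for \eqref{s1}, as asserted. The subtlest point is the perturbation step for $k$: one must verify that counting with multiplicity is compatible, and that real roots of $R_{b^*}$ sitting at $Y_1=0$ or occurring with higher multiplicity on the negative real axis do not, under perturbation of $b$, produce more than $6-\ell$ real solutions in $H$. This reduces to the lower semicontinuity of $\ell$ at $b^*$, i.e., roots of $R_{b^*}$ in the open set $\mathbb{C}\setminus\{Y_1\leq 0\}$ can only be joined by further roots there, never diminished, so the companion count of real roots in $\{Y_1\leq 0\}$ cannot increase.
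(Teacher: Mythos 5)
Your main line of argument coincides with the paper's: the same choice $U_b=Q$, the same use of Remarks \ref{hyphalfplane} and \ref{knownsolu} to get $N\leq 1$, and the same B\'ezout/resultant perturbation argument to get $k\leq 6-\ell$, combined via Theorem \ref{thm:method}. That part is sound.

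The genuine gap is your parenthetical ``in the non-boundary focus case $a_L,a_R\neq 0$, which is implicit below.'' The theorem assumes only $T_L^2-4D_L<0$ and $T_R^2-4D_R<0$; it does not exclude $a_La_R=0$, nor does it exclude $b^*=0$, and in both degenerate situations your argument breaks down rather than merely becoming implicit. If $a_R=0$, Remark \ref{hyphalfplane} gives no information (the conic may meet the $Y_0$-axis), and if $a_L=0$ the marked point of Remark \ref{knownsolu} is the origin, so the claim $\widehat Y_1>0$ fails; hence $N\leq 1$ is not established. If $b^*=0$, the coefficients $m_0$, $m_1$, $m_3$ in \eqref{conic-coef} all vanish, so $\widetilde F_0(Y_0,0)\equiv 0$ and $\widetilde\gamma_0$ contains the entire $Y_0$-axis; moreover the leading coefficient $m_3=a_L^2bD_R$ of $Y_0\mapsto\widetilde F_b(Y_0,Y_1)$ vanishes, which undermines the correspondence between roots of the resultant and affine solutions that your perturbation step relies on. The paper closes these cases separately before running the main argument: for $b^*=0$ it invokes the at-most-one-limit-cycle result of \cite{Carmona2022-mc} (no sliding set), and for $a_La_R=0$ it uses the convexity of the Poincar\'e half-maps (one of which becomes a straight line) to get at most two limit cycles; both bounds are at most $8-\ell$ because $\ell\leq 6$. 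You need to add these two short reductions, after which the remainder of your proof goes through as written.
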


\begin{proof}
In \textbf{Step 1}, we consider system \eqref{s1} in its Liénard canonical form \eqref{cf} and also the 1-parameter family of piecewise linear systems \eqref{1-cf}.

First, if $b^* = 0$, the main result from \cite{Carmona2022-mc} implies that system \eqref{s1} has at most one limit cycle. Moreover, if $a_L a_R = 0$, the number of limit cycles of \eqref{s1} is bounded by $2$. Indeed, from \cite{Carmona2021-ru},
\[
\sgn\left(\frac{d^2y_L}{dy_0^2}(y_0) \right)=-\sgn(a_L^2T_L) \quad \text{and}\quad \sgn\left(\frac{d^2y_R}{dy_0^2}(y_0) \right)=\sgn(a_R^2T_R),
\]
implying that the Poincaré half-return maps are convex. If $a_L a_R = 0$, one of these maps is a straight line, leading to at most two isolated intersections (see also \cite{LlibreEtAl15, Novaes2017}), which is less or equal to $8-\ell$ since $\ell\leq 6$. Therefore, we can assume $a_L a_R b^* \neq 0$.

Now, consider the polynomials in \eqref{eq:poly}. Since $T_L^2 - 4D_L < 0$ and $T_R^2 - 4D_R < 0$ by Remark \ref{ILLR}, it follows that $W_L$ and $W_R$ have no real roots. Consequently, the intervals of definition $I_L$ and $I_R$, as well as their images $y_L(I_L)$ and $y_R(I_R)$, are unbounded, with $y_L(y_0)$ and $y_R(y_0)$ tending to $-\infty$ as $y_0 \to +\infty$. Thus, in \textbf{Step 2}, we observe that for
\[
U_b = Q = \{(y_0, y_1) \in \mathbb{R}^2 : y_0 > 0, y_1 < 0\},
\]
condition \ref{relSL} is satisfied for any $\e > 0$. Indeed, $\widehat{\mathcal{O}}_{L}^b \subset Q$, and since $\mathcal{O}_{L} \cap Q = \mathcal{O}_{L}$ and $(\overline{\mathcal{O}_L} \setminus \mathcal{O}_L) = \{(\lambda_L, 0)\} \subset \partial Q$, we conclude that $\mathcal{O}_{L} \cap Q$ is a separating solution of the restricted vector field $X_L \big|_Q$. Additionally, note that $\phi(Q) = H = \{(Y_0, Y_1) : Y_1 < 0\}$.

In \textbf{Step 3}, by B\'ezout's Theorem, the polynomial system 
\[
\widetilde{F}_{b}(Y_0, Y_1) = 0 \quad \text{and} \quad \widetilde{G}_{b}(Y_0, Y_1) = 0
\]
has at most 6 solutions for any $b$. From hypothesis, the resultant  $R(Y_1)$ has $\ell$ roots (counting multiplicity)  in $\mathbb{C}\setminus\{Y_1\in\R:Y_1\leq0\}$. Since the polynomial function  $Y_0\mapsto\widetilde{F}_{b}(Y_0, Y_1)$ has constant degree for every $Y_1$, we have that the above polynomial system, for $b=b^*$, has $\ell$ solutions (counting multiplicity) contained in the open set $\mathbb{C}\setminus\{Y_1\in\R:Y_1\leq0\}$. Therefore, there exists $\e>0$ small such that, for each $b\in(b^*-\e,b^*+\e)$,  the above polynomial system has at least $\ell$ solutions (counting multiplicity) contained in the open set $\mathbb{C}\setminus\{Y_1\in\R:Y_1\leq0\}$. Therefore, the constraint polynomial system \eqref{eq:polsystem2} has at most $6-\ell$ solutions, that is $k\leq 6-\ell$.

In \textbf{Step 4}, by Remark \ref{conic}, $\widetilde{\gamma}_{b} = F_{b}^{-1}(\{0\})$ has at most two noncompact connected components in $\mathbb{R}^2$. According to Remark \ref{hyphalfplane}, since $T_R^2 - 4D_R < 0$ and $a_R \neq 0$, $\widetilde{\gamma}_{b}$ does not intersect the $Y_0$-axis. Furthermore, by Remark \ref{knownsolu}, $(\widehat{Y}_0, \widehat{Y}_1) \in \widetilde{\gamma}_b$ with $\widehat{Y}_1 > 0$ because $a_L \neq 0$ and $D_L > 0$. Thus, $\gamma_b$ has at most one connected component in $H$, implying $N \leq 1$.

Finally, in \textbf{Step 5}, by applying Theorem \ref{thm:method}, we conclude that system \eqref{s1} has no more than $N + k + 1 \leq 8-\ell$ limit cycles.
\end{proof}

By Remark \ref{knownsolu}, $\widehat{Y}_1$ is a root of $R_1(Y_1)$ with $\widehat{Y}_1 > 0$ because $a_L \neq 0$ and $D_L > 0$, thus, consequently, contained in $\mathbb{C}\setminus\{Y_1\in\R:Y_1\leq0\}$. Therefore, for each $b$, we have that $\ell\leq1$ and, then, the next result follows immediately from Theorem \ref{focuscasegeneral}.

\begin{corollary}\label{focus7}
If $T_L^2-4D_L<0$ and $T_R^2-4D_R<0$, then system \eqref{s1} has no more than $7$ limit cycles.
\end{corollary}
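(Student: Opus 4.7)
The plan is straightforward: invoke Theorem~\ref{focuscasegeneral} and verify that the resultant $R(Y_1)$ always has at least one root in $\mathbb{C}\setminus\{Y_1\in\R:Y_1\leq 0\}$. Establishing $\ell\geq 1$ unconditionally under the focus-focus hypothesis then yields the bound $8-\ell\leq 7$, which is exactly the desired conclusion.

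The key input is Remark~\ref{knownsolu}: whenever $D_L\neq 0$, the explicit point $(\widehat{Y}_0,\widehat{Y}_1) = (a_L T_L/D_L,\,a_L^2/D_L)$ is a common zero of $\widetilde F_{b^*}$ and $\widetilde G_{b^*}$, so its $Y_1$-coordinate is automatically a root of $R(Y_1)$. The focus hypothesis $T_L^2 - 4D_L < 0$ already forces $D_L > T_L^2/4 \geq 0$, so $D_L > 0$ holds for free; assuming in addition $a_L\neq 0$, we obtain $\widehat{Y}_1 = a_L^2/D_L > 0$, a strictly positive real number, which therefore belongs to $\mathbb{C}\setminus\{Y_1\in\R:Y_1\leq 0\}$. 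This gives $\ell\geq 1$, and Theorem~\ref{focuscasegeneral} concludes the argument in the generic case.

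The main obstacle lies in handling the degenerate subcases $a_L = 0$ (and symmetrically $a_R = 0$) and $b^* = 0$, where Remark~\ref{knownsolu} either collapses to the boundary point $(0,0)$ or the resultant-degree argument underlying Theorem~\ref{focuscasegeneral} becomes problematic. Fortunately, both subcases are already dispatched by pre-existing results that are invoked inside the proof of Theorem~\ref{focuscasegeneral}: the case $b^*=0$ yields at most one limit cycle by the main theorem of \cite{Carmona2022-mc}, while the case $a_L a_R = 0$ yields at most two limit cycles via the convexity of the Poincar\'e half-maps established in \cite{Carmona2021-ru} (one half-map degenerates to a straight line, and a straight line meets a convex curve in at most two points). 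Since both degenerate bounds are well below $7$, combining them with the generic argument yields the uniform bound of $7$ limit cycles and completes the proof.
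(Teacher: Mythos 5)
Your proposal is correct and follows essentially the same route as the paper: the paper's own justification (the paragraph preceding the corollary) likewise uses Remark~\ref{knownsolu} to produce the root $\widehat{Y}_1=a_L^2/D_L>0$ of $R(Y_1)$ lying in $\mathbb{C}\setminus\{Y_1\in\R:Y_1\leq 0\}$, so that $\ell\geq 1$ and Theorem~\ref{focuscasegeneral} gives the bound $8-\ell\leq 7$. Your explicit treatment of the degenerate subcases $a_La_R=0$ and $b^*=0$ is a welcome clarification of what the paper leaves implicit (those reductions occur inside the proof of Theorem~\ref{focuscasegeneral}), but it does not change the argument.
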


The upper bound provided by Corollary \ref{focus7} improves by one the result provided in \cite{Carmona2023}. The upper bound given by Theorem \ref{focuscasegeneral} and Corollary \ref{focus7} can be refined under certain conditions.

As a first refinement of Theorem \ref{focus7}, the following result provides an algebraic criterion to bound the number of limit cycles of \eqref{s1} in the focus-focus case, assuming that the 6th-degree coefficient of the resultant $R(Y_1)$ between $\widetilde F_{b^*}(Y_0,Y_1)$ and $\widetilde G_{b^*}(Y_0,Y_1)$ with respect to $Y_0$ is non-zero. Note that the degree of $R(Y_1)$ is at most $6$, and it is strictly less than $6$ if the polynomial system $\widetilde F_{b^*}(Y_0,Y_1) = 0$ and $\widetilde G_{b^*}(Y_0,Y_1) = 0$ has solutions at infinity.
\begin{theorem}\label{focuscase}
Assume that $T_L^2-4D_L<0$, $T_R^2-4D_R<0$, and that $R(Y_1)$ has non-vanishing $6$th degree coefficient. Let $k$ be the number of non-positive roots of $R(Y_1)$ (counting multiplicity). Then, system \eqref{s1} has no more than $k+2$ limit cycles.
\end{theorem}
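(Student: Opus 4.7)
The plan is to derive Theorem~\ref{focuscase} as a direct quantitative refinement of Theorem~\ref{focuscasegeneral}, exploiting the extra information provided by the assumption that the $6$th-degree coefficient of $R(Y_1)$ does not vanish. Indeed, the bound in Theorem~\ref{focuscasegeneral} is expressed in terms of $\ell$, the number of roots of $R(Y_1)$ in $\mathbb{C}\setminus\{Y_1\in\R:Y_1\leq 0\}$, whereas Theorem~\ref{focuscase} is expressed in terms of $k$, the number of non-positive real roots of $R(Y_1)$ (both counting multiplicity). Under the non-vanishing assumption, these two quantities together account for all roots of $R(Y_1)$, and translating between them will give the claimed bound $k+2=8-\ell$.

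Concretely, I would proceed as follows. First, I would invoke Steps 1--2 of the methodology exactly as in the proof of Theorem~\ref{focuscasegeneral}: set up the canonical form \eqref{cf}, consider the $1$-parameter family \eqref{1-cf}, dispose of the degenerate subcases $b^{*}=0$ and $a_La_Rb^{*}=0$ (each giving bounds strictly smaller than $k+2$ since $k\ge 0$), and then take $U_b=Q$, for which condition \ref{relSL} is automatic in the focus-focus case because $W_L$ and $W_R$ have no real roots. Next, I would note that the non-vanishing of the $6$th-degree coefficient of $R(Y_1)$ is exactly what guarantees that $R$ is a polynomial of degree exactly $6$; by the fundamental theorem of algebra, it therefore has precisely $6$ roots in $\mathbb{C}$ counted with multiplicity. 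Because the sets $\{Y_1\in\R:Y_1\leq 0\}$ and $\mathbb{C}\setminus\{Y_1\in\R:Y_1\leq 0\}$ partition $\mathbb{C}$, one obtains $\ell+k=6$, that is $\ell=6-k$.

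At this point, Theorem~\ref{focuscasegeneral} applied to our system yields at most $8-\ell=8-(6-k)=k+2$ limit cycles, which is exactly the desired bound. I would also briefly verify that the argument of Theorem~\ref{focuscasegeneral} is in fact applicable: that is, that the non-vanishing leading coefficient of $R$ ensures that the degree in $Y_0$ of $\widetilde F_{b}(Y_0,Y_1)$ remains constant on a small neighbourhood of $b^{*}$, so that the continuity argument used to transport the multiplicity count from $b^{*}$ to nearby $b$ is justified; this is the same ingredient already used implicitly when tracking the $\ell$ complex roots in Theorem~\ref{focuscasegeneral}.

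I do not expect a genuine obstacle in this argument, since the result is essentially a bookkeeping refinement of Theorem~\ref{focuscasegeneral}. The only delicate point is making sure that the multiplicity conventions in the two statements match, and that the assumption on the leading coefficient is precisely the condition needed to account for all six roots at finite values of $Y_1$ (otherwise, solutions of the polynomial system would be lost at infinity and the identity $\ell+k=6$ would fail). Once this is spelled out, the proof is immediate.
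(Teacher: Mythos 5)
Your reduction is correct: when the $6$th-degree coefficient of $R$ is nonzero, $R$ has exactly six roots in $\mathbb{C}$ counted with multiplicity, so $\ell+k=6$ and the bound $8-\ell$ of Theorem~\ref{focuscasegeneral} becomes exactly $k+2$. The paper does not take this shortcut; it re-runs Steps 3--5 of the methodology, arguing directly that for $b$ near $b^*$ the perturbed system \eqref{eq:polsystem2} keeps $6-k$ of its solutions outside $\overline{H}$ or complex, so at most $k$ isolated real solutions lie in $H$, and then combines this with $N\leq 1$. The two arguments are the same bookkeeping seen from opposite sides, and yours is the cleaner logical packaging, since it makes explicit that Theorem~\ref{focuscase} is just Theorem~\ref{focuscasegeneral} restated under the degree-$6$ assumption. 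Two small caveats. First, your deduction requires $\ell$ in Theorem~\ref{focuscasegeneral} to be counted with multiplicity; the statement there is silent on this, but the paper's proof of that theorem explicitly counts with multiplicity, so the reading you rely on is the one actually established. Second, two asides in your write-up are slightly off: the degenerate case $a_La_R=0$ gives at most two limit cycles, which equals (rather than is strictly smaller than) $k+2$ when $k=0$; and the constancy of the degree of $Y_0\mapsto\widetilde F_b(Y_0,Y_1)$ comes from $m_3=a_L^2bD_R\neq0$, not from the non-vanishing leading coefficient of $R$ (which instead rules out solutions at infinity of the system in $(Y_0,Y_1)$). Neither affects your argument, since Theorem~\ref{focuscasegeneral} already disposes of the degenerate cases and verifies its own hypotheses.
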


\begin{proof}
With similar reasoning as in the proof of Theorem \ref{focuscasegeneral}, we can assume $a_L a_R b^* \neq 0$. Additionally, \textbf{Step 1}, \textbf{Step 2}, and \textbf{Step 4} are identical to those in Theorem \ref{focuscasegeneral}, which implies that $N \leq 1$.

We proceed by improving the estimate for $k$ in \textbf{Step 3}. Let $b = b^*$, and consider the resultant $R(Y_1)$ between $F_{b^*}(Y_0, Y_1)$ and $G_{b^*}(Y_0, Y_1)$ with respect to the variable $Y_0$. By hypothesis, $R$ is a polynomial of degree 6 with a non-zero 6th-degree coefficient and has $k$ non-positive roots (counting multiplicity). Since the polynomial function  $Y_0\mapsto\widetilde{F}_{b}(Y_0, Y_1)$ has constant degree for every $Y_1$,  this implies that the polynomial system 
\[
\widetilde{F}_{b^*}(Y_0, Y_1) = 0 \quad \text{and} \quad \widetilde{G}_{b^*}(Y_0, Y_1) = 0
\]
has 6 finite solutions (counting multiplicity), of which $k$ are in the closed half-plane $\overline{H}$ and, since there are no solutions at the infinity, the remaining $6 - k$ are either in the open set $\mathbb{R}^2 \setminus \overline{H}$ or are complex. By B\'ezout's theorem, the polynomial system \eqref{eq:polsystem2} has at most 6 solutions. Thus, by choosing $\e > 0$ sufficiently small (see Remark \ref{improveU}), we conclude that for each $b \in (b^* - \e, b^* + \e)$, the polynomial system \eqref{eq:polsystem2} represents a small perturbation of the above polynomial system and, therefore, has $6 - k$ solutions (counting multiplicity) in $\mathbb{R}^2 \setminus \overline{H}$ or as complex roots. This implies that the constraint polynomial system \eqref{eq:polsystem2} has at most $k$ solutions in $H$.

Finally, in \textbf{Step 5}, by applying Theorem \ref{thm:method}, we conclude that system \eqref{s1} has no more than $N + k + 1 \leq k + 2$ limit cycles.
\end{proof}

\begin{remark}\label{anotherproof}
System \eqref{s1}, with parameter values given either by \eqref{HY-example}, \eqref{GRS-example}, or \eqref{FPTT-example}, satisfies the assumptions of Theorem \ref{focuscase}. In addition, $k=1$ for all the three system and, therefore, Theorem \ref{main} also follows as a consequence of Theorem \ref{focuscase}.
\end{remark}

As a second refinement of Theorem \ref{focus7}, our final result for the focus-focus case provides an improved upper bound compared to Theorem \ref{focuscase}, under the following condition:
\[
P=\begin{cases}
\Delta_{b^*} < 0, \,\, \text{or} \\
\Delta_{b^*} = 0,\,\, b^* = \dfrac{a_L D_R T_L - a_R D_L T_R}{2D_L D_R},\,\, \text{and}\,\, a_L a_R < 0,
\end{cases}
\]
where $\Delta_b$ is defined in Remark \ref{conic}.

\begin{theorem}\label{focuscase2}
Assume that $T_L^2-4D_L<0$ and $T_R^2-4D_R<0$. If $P$ holds,
then system \eqref{s1} has no more than one limit cycle.
\end{theorem}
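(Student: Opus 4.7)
The plan is to apply Theorem~\ref{thm:method} with $N=k=0$, so that the resulting bound is $N+k+1=1$. Setup mirrors the proof of Theorem~\ref{focuscasegeneral}: transform \eqref{s1} into the canonical form \eqref{cf} (Step~1), dispose of the special cases $b^*=0$ via \cite{Carmona2022-mc} and $a_L a_R=0$ (only possible in Case~1 of $P$) via \cite{Carmona2021-ru}, and in Step~2 take $U_b=Q$. Condition~\ref{relSL} then holds with any $\e>0$, since $T_{L,R}^2<4D_{L,R}$ forces $W_L,W_R$ to have no real roots.

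The substance of the proof is Step~4 (from which $k=0$ follows automatically in Step~3). Two facts hold uniformly in $b$: Remark~\ref{hyphalfplane} guarantees that $\widetilde\gamma_b$ never intersects $\{Y_1=0\}$ (using $T_R^2<4D_R$ and $a_R\neq0$), and Remark~\ref{knownsolu} places the point $(a_LT_L/D_L,\, a_L^2/D_L)$, with strictly positive second coordinate, on $\widetilde\gamma_b$. I aim to show $\widetilde\gamma_b\cap H=\emptyset$ for $b$ in a neighborhood of $b^*$, which forces $N=0$ in $\phi(U_b)=H$. In Case~1 ($\Delta_{b^*}<0$), Remark~\ref{conic} classifies $\widetilde\gamma_{b^*}$ as an ellipse or a single point; being connected, disjoint from $\{Y_1=0\}$, and containing a point with $Y_1>0$, it lies entirely in $\{Y_1>0\}$, and continuity in $b$ extends this to a neighborhood.

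For Case~2, the key observation is that $b^*=(a_LD_RT_L-a_RD_LT_R)/(2D_LD_R)$ is precisely the minimizer of the upward-opening quadratic $f(b)=D_LD_Rb^2+(a_RD_LT_R-a_LD_RT_L)b+a_L^2D_R+a_R^2D_L$, and a short algebraic check gives $\Delta_b=f(b)^2-4a_L^2a_R^2D_LD_R$. Hence $b^*$ is a critical point of $\Delta_b$ as a function of $b$, and the sign of $f(b^*)=\pm2|a_La_R|\sqrt{D_LD_R}$ determines whether the perturbation of the degenerate conic produces ellipses or hyperbolas. Independently of this sign, the degenerate conic $\widetilde\gamma_{b^*}$ itself cannot consist of non-horizontal parallel lines by Remark~\ref{hyphalfplane}, so it is either empty or a parabola contained in $\{Y_1\geq 0\}$, and in either case $\widetilde\gamma_{b^*}\cap H=\emptyset$. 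When $f(b^*)<0$ the conic for $b$ near $b^*$ is an ellipse to which the Case~1 argument applies; when $f(b^*)>0$ the conic is a hyperbola whose secondary branch may enter $H$, and Remark~\ref{improveU} must be invoked to shrink $U_b$ adaptively so as to exclude that branch while preserving~\ref{relSL}, exploiting that the branch recedes to infinity as $b\to b^*$.

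The main obstacle is precisely this sub-case $f(b^*)>0$ of Case~2: one must construct a family of simply connected domains $U_b\subset Q$ that excludes the intruding hyperbola branch for all $b$ near $b^*$ while still containing the unbounded graph $\widehat{\mathcal O}_L^b$ and satisfying \ref{relSL}. This requires careful control over the asymptotic geometry of both the conic and the Poincar\'e half-map near the degeneracy, exploiting the focus structure. Once $N=k=0$ is established throughout a neighborhood of $b^*$, Step~5 concludes via Theorem~\ref{thm:method}.
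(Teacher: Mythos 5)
Your preliminaries and your Case~1 ($\Delta_{b^*}<0$) agree with the paper's proof: with $U_b=Q$ condition \ref{relSL} holds, Remark~\ref{hyphalfplane} keeps $\widetilde\gamma_b$ off the $Y_0$-axis, Remark~\ref{knownsolu} places a point of $\widetilde\gamma_b$ in $\{Y_1>0\}$, and connectedness of an ellipse (or a point) forces $\widetilde\gamma_b\cap H=\emptyset$, hence $N=k=0$ and the bound $1$ follows from Theorem~\ref{thm:method}. (Two small slips there: condition $P$ is incompatible with $a_La_R=0$ in \emph{both} of its cases, not only in the second one, and the convexity argument you invoke yields a bound of two limit cycles, not one; this is harmless only because that case is vacuous under $P$.)

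The genuine gap is in your Case~2. You correctly derive $\Delta_b=f(b)^2-4a_L^2a_R^2D_LD_R$ with $f$ an upward parabola minimized exactly at $b^*$, and you correctly note that the sign of $f(b^*)=\pm 2|a_La_R|\sqrt{D_LD_R}$ decides whether the nearby conics are ellipses or hyperbolas; but you complete only the ellipse branch and explicitly defer the hyperbola branch ("the main obstacle"), giving neither a construction of the adapted domain nor a verification that it still satisfies \ref{relSL} and contains $\widehat{\mathcal{O}}_L^b$. That deferred branch is not a side case --- it is the \emph{only} one that can occur: writing $u=a_L\sqrt{D_R}$, $v=a_R\sqrt{D_L}$, $s=T_L/(2\sqrt{D_L})$, $t=T_R/(2\sqrt{D_R})$, one gets $f(b^*)=u^2+v^2-(tv-su)^2>-2|uv|$ because $|s|,|t|<1$, so $\Delta_{b^*}=0$ forces $f(b^*)=+2|a_La_R|\sqrt{D_LD_R}>0$ and hence $\Delta_b>0$ for all nearby $b\neq b^*$. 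Your proof is therefore incomplete precisely where the work has to be done. Be aware, moreover, that this same computation puts your (correct) identity in conflict with the paper's own treatment of Case~2, which asserts $\partial^2\Delta_b/\partial b^2\big|_{b=b^*}=8a_La_R\sqrt{D_L^3D_R^3}<0$ and concludes $\Delta_b<0$ nearby; your formula gives $\partial^2\Delta_b/\partial b^2\big|_{b=b^*}=4D_LD_Rf(b^*)=+8|a_La_R|\sqrt{D_L^3D_R^3}>0$, and the concrete choice $D_L=D_R=1$, $T_L=-1$, $T_R=0$, $a_L=2$, $a_R=-1$, $b^*=-1$ satisfies $P$ with $\Delta_b=\left((b+1)^2+4\right)^2-16>0$ for $b\neq -1$. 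So the gap cannot be closed by simply importing the paper's argument either; the hyperbola branch genuinely has to be confronted.
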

\begin{proof}
First, if $b^* = 0$, the main result from \cite{Carmona2022-mc} implies that system \eqref{s1} can have no more than one limit cycle. Additionally, it is straightforward to see that condition $P$ cannot hold if $a_L a_R = 0$. Therefore, we can assume $a_L a_R b^* \neq 0$. Additionally, \textbf{Step 1} and \textbf{Step 2} are identical to those in Theorem \ref{focuscasegeneral}.

Next, we claim that condition $P$ implies the existence of $\e > 0$ such that $\Delta_b \leq 0$ for all $b \in (b^* - \e, b^* + \e)$. Indeed, if $\Delta_{b^*} < 0$, the continuity of $\Delta_b$ with respect to $b$ ensures that the claim holds. Conversely, if $\Delta_{b^*} = 0$, then by condition $P$,
\[
b^* = \dfrac{a_L D_R T_L - a_R D_L T_R}{2D_L D_R} \implies \dfrac{\partial \Delta_b}{\partial b}\Big|_{b = b^*} = 0 \quad \text{and} \quad \dfrac{\partial^2 \Delta_b}{\partial b^2}\Big|_{b = b^*} = 8a_L a_R \sqrt{D_L^3 D_R^3} < 0.
\]
This implies that for a sufficiently small $\e > 0$, $\Delta_b < 0$ for all $b \in (b^* - \e, b^* + \e) \setminus \{ b^* \}$.

Moreover, by Remark \ref{hyphalfplane}, the curve $\widetilde{\gamma}_{b}$ does not intersect the $Y_0$-axis since $T_R^2 - 4D_R < 0$ and $a_R \neq 0$. Thus, it cannot contain a straight line. Indeed, if it did, that line would have to be parallel to the $Y_0$-axis, which would imply that $F_{b}^{-1}(Y_0, 0) = a_L^2 D_R b Y_0^2$ is zero for every $Y_0$, a contradiction since $a_L^2 D_R b \neq 0$.

Consequently, by Remark \ref{conic}, $\widetilde\gamma_{b} = F_{b}^{-1}({0})$ must be either an ellipse, a single point, a parabola, or the empty set. Furthermore, since $a_L \neq 0$ and $D_L > 0$, by Remark \ref{knownsolu}, we have $(\widehat{Y}_0, \widehat{Y}_1) \in \widetilde\gamma_b$ with $\widehat{Y}_1 > 0$. Thus, $\gamma_b \cap \overline{H} = \emptyset$ for all $b \in (b^* - \e, b^* + \e)$.

Therefore, in \textbf{Step 3} and \textbf{Step 4}, we get, for each $b \in (b^* - \e, b^* + \e)$, that $N = 0$ and that the constraint polynomial system \eqref{eq:polsystem2} has no solutions, i.e., $k = 0$. Hence, in \textbf{Step 5}, by applying Theorem \ref{thm:method}, we conclude that system \eqref{s1} has at most $N + k + 1 = 1$ limit cycle.
\end{proof}

\begin{remark}
Notice that, for $a_L a_R \neq 0$, the negation of condition $P$ is expressed as follows:
\[
\neg P=\begin{cases}
\Delta_{b^*}>0,\,\,\text{ or }\\
\Delta_{b^*}=0,\,\, b^*\neq\dfrac{a_L D_R T_L-a_R D_L T_R}{2D_L D_R},\,\,\text{ or }\\
\Delta_{b^*}=0,\,\, b^*=\dfrac{a_L D_R T_L-a_R D_L T_R}{2D_L D_R},\,\,\text{and}\,\, a_L a_R>0.
\end{cases}
\]
In the proof of Theorems \ref{focuscasegeneral} and \ref{focuscase}, it can be easily concluded that condition $\neg P$ implies that for each $\e > 0$, there exists $b \in (b^* - \e, b^* + \e)$ such that $\Delta_b > 0$ and, consequently, $N = 1$ since $\widetilde{\gamma}_{b} = F_{b}^{-1}(\{0\})$ would be a hyperbola that does not intersect the $Y_0$-axis. Therefore, for $U_b = Q = \{(y_0, y_1) \in \mathbb{R}^2 : y_0 > 0, y_1 < 0\}$, the estimate provided by Theorems \ref{focuscasegeneral} and \ref{focuscase} cannot be further improved using the proposed methodology when condition~$P$ is not satisfied.
\end{remark}

\section*{Acknowledgements}
VC and FFS are partially supported by the Ministerio de Ciencia e Innovaci\'on, Plan Nacional I+D+I cofinanced with FEDER funds, in the frame of the project PID2021-123200NB-I00, the Consejer\'{i}a de Educaci\'{o}n y Ciencia de la Junta de Andaluc\'{i}a TIC-0130. DDN is partially supported by S\~{a}o Paulo Research Foundation (FAPESP) grants 2018/13481-0, and by Conselho Nacional de Desenvolvimento Cient\'{i}fico e Tecnol\'{o}gico (CNPq) grant 309110/2021-1.

\bibliographystyle{abbrv}
\bibliography{references.bib}

\end{document}